\newtheorem{theorem}{Theorem}[section]
\newtheorem{fact}[theorem]{Fact}
\newtheorem{lemma}[theorem]{Lemma}
\newtheorem{conjecture}[theorem]{Conjecture}
\theoremstyle{definition}
\newtheorem{definition}[theorem]{Definition}
\newtheorem{example}[theorem]{Example}
\newtheorem{remark}[theorem]{Remark}
\newtheorem{case}{Case}
\numberwithin{equation}{section}
\numberwithin{case}{theorem}
\newcommand{\cP}{\mathcal{P}}		%curly p
\newcommand{\bbN}{\mathbb{N}}		%bold n
\newcommand{\bbP}{\mathbb{P}}		%bold p
\newcommand{\bbQ}{\mathbb{Q}}		%bold q
\newcommand{\bbZ}{\mathbb{Z}}		%bold z
\renewcommand{\div}{\operatorname{div}} % divisor of a function
\newcommand{\ord}{\operatorname{ord}}   % order of a function at a point
\newcommand{\MD}{\operatorname{MD}} 	% minimal decomposition
\newcommand{\union}{\cup}
\newcommand{\cross}{\times}
\newcommand{\floor}[1]{\lfloor #1 \rfloor}
\newcommand{\ceil}[1]{\lceil #1 \rceil}
\newcommand{\Floor}[1]{\left\lfloor #1 \right\rfloor}
\newcommand{\Ceil}[1]{\left\lceil #1 \right\rceil}
\newcommand{\size}[1]{\lvert #1 \rvert}
\newcommand{\textand}{\quad \text{and} \quad}
\renewcommand{\>}{\right\rangle}
\newcommand{\ignore}[1]{}
\newcommand{\ib}{{\mathrm{(b)}}}
\newcommand{\ic}{{\mathrm{(c)}}}
\newcommand{\id}{{\mathrm{(d)}}}
\newcommand{\solid}[1]{\put#1{\circle*{0.25}}}
\newcommand{\open}[1]{\put#1{\circle{0.25}}}
\newcommand{\xdot}[1]{\put#1{\makebox(0,0){\small +}}}
\title{Section Rings of $\bbQ$-Divisors on Genus $1$ Curves}
\author{Michael Cerchia, Jesse Franklin, Evan O'Dorney}
\date{2024}
\begin{document}
    
    \maketitle
    
    \begin{abstract}
        We compute generators and relations for the section ring of a rational divisor on an elliptic curve. Our technique generalizes the work of O'Dorney (in genus zero) and Voight--Zureick-Brown (for specific divisors arising from the study of stacky curves). For effective divisors supported on at most two points, we give explicit descriptions of the generators and the leading terms of the relations for a minimal presentation. As in the genus zero case, the generators are parametrized by best lower approximations to the coefficients, but there are added wrinkles. Following Landesman, Ruhm and Zhang we can bound the degrees of generators for the section ring of an effective divisor supported at any finite number of points.
    \end{abstract}
    
    \tableofcontents
    
    \newpage
    
    \section{Motivation}
    Let $C$ be an algebraic curve (smooth, projective) over a field $\Bbbk$, which we will take to be algebraically closed for convenience, although this assumption is not necessary. Let $D$ be a $\bbQ$-divisor on $C$. We denote by $H^0(D)$ (short for $H^0(C,D)$) the $\Bbbk$-vector space of rational functions $f$ on $C$ such that $\div f + D$ is effective. Our aim in this paper is to understand the generators and relations of the \textbf{section ring}
    \[
      S_D = \bigoplus_{d \geq 0} H^0(dD),
    \]
    where the ring structure comes from the multiplication map $\cdot \colon H^0(dD) \cross H^0(eD) \to H^0((d+e) D)$. We will find it convenient to introduce a bookkeeping variable $u$ and write
    \[
      S_D = \bigoplus_{d \geq 0} u^d H^0(dD).
    \]
    To exclude degenerate cases, we assume that $D$ is ample (that is, $\deg D > 0$). 
    
    Section rings, most notably the \textbf{canonical ring} where the divisor $D = K_C$ is a canonical divisor, are widely used to produce embeddings of a curve into (weighted) projective spaces. With a theory of general $\bbQ$-divisors, one can compute log canonical rings of stacky curves $C$, as in \cite{VZB}, where the support of the \textbf{log divisor} (the cusps of $C$) consists of stacky points. This is useful to number theorists who consider log canonical rings of modular curves, such as in the Drinfeld setting \cite{Franklin-geometry-Drinfeld-modular-forms}. Another application of our generalization is the case of non-tame stacky curves, such as \cite[Remark $5.3.11$]{VZB} where the characteristic of the ground field divides the order of the stabilizer of some point on the curve. In particular, a work in progress by Kobin and Zureick-Brown studies modular forms mod $p = 2$ and $3$, exploiting the phenomenon that elliptic curves with $j = 0$ have extra automorphisms in characteristic $2$ or $3$. In these characteristics, modular curves are \emph{wild} and their log-canonical rings are not addressed by \cite{VZB}; Kobin and Zureick-Brown rely very heavily on the explicit computations of \cite{O'Dorney} and of the present paper. Additionally, our paper ``finishes'' this line of inquiry, in the sense that we do not expect there to be any nice answer to this type of question for small-degree divisors on higher-genus stacky curves.
    
    Our work is organized similarly to \cite{O'Dorney}. First, we discuss the case of a divisor $D = \alpha P$ supported at one point. We find that most of the generators of the section ring $S_D$ are indexed by the best lower approximations to $\alpha$, just as in the genus $0$ case; but there can be up to three exceptional generators depending on whether the fractional part $\{-1/\alpha\}$ lies within various subintervals ($[0,1/3)$, $[1/3,1/2)$, etc.) within the unit interval $[0,1)$ (Section \ref{sec:1point}).
    
    Next we consider the two-point effective case $D = \alpha^{(1)} P^{(1)} + \alpha^{(2)} P^{(2)}$, $\alpha^{(k)} > 0$. Here the section ring combines the flavor of the one-point cases from genera $0$ and $1$ (Section \ref{section: 2pt effective}).
    
    We would also like to address ineffective divisors (such as $\alpha^{(1)} P^{(1)} - \alpha^{(2)} P^{(2)}$) and divisors supported at more than two points. However, a complete description of the section ring is more elusive in these cases, and at the moment we merely offer interesting examples and conjectures (Section \ref{sec:unans}). Thanks to \cite{Landesman-Ruhm-Zhang-Spin-canonical-rings} there are upper bounds for the degrees of generators and relations for a large family of $\bbQ$-divisors supported at an arbitrary number of points on curves of all genera. However, not every possible $\bbQ$-divisor is covered by the existing theory of \cite{VZB} and \cite{Landesman-Ruhm-Zhang-Spin-canonical-rings}, hence our work.
    
    \subsection{Main results} For our readers' convenience we state our main theorems here, slightly simplified for readability.
    
    For comparison, here is the result of the last author from \cite{O'Dorney} concerning the one-point case on $\bbP^1$:
    \begin{theorem}[\cite{O'Dorney}, Theorem 4]%
        \label{thm:1point_P1} 
        Let $D = \alpha (\infty)$ be a $\bbQ$-divisor supported on one point of $\bbP^1$. Denote by $t$ a coordinate on $\bbP^1$ having a simple pole at this point. Let
        \[
        0 = \frac{c_0}{d_0} < \frac{c_1}{d_1} < \cdots < \frac{c_r}{d_r} = \alpha
        \]
        be the nonnegative best lower approximations to $\alpha$. Then $S_D$ has a minimal presentation consisting of the $r+1$ generators $f_i = t^{c_i}u^{d_i}$ and $\binom{r}{2}$ relations of the form
        \begin{equation} \label{eq:1ptrel_P1}
            g_{ij} = f_i f_j - f_{h_{ij}}^{a_{ij}} \; (i < h_{ij} < j) \quad \text{or} \quad g_{ij} = f_i f_j - f_{h_{ij}}^{a_{ij}} f_{h_{ij}+1}^{b_{ij}} \; (i < h_{ij} < h_{ij}+1 < j)
        \end{equation}
        for each $(i,j)$ with $j \geq i+2$, with some positive integers $a_{ij}$ and $b_{ij}$.
    \end{theorem}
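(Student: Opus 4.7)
The plan is to reformulate the section ring combinatorially and then read the presentation off the geometry of lattice points below a line. Since $\bbP^1 \setminus \{\infty\} = \Spec \Bbbk[t]$, a function on $\bbP^1$ regular away from $\infty$ with pole of order at most $\lfloor d\alpha \rfloor$ at $\infty$ is precisely a polynomial of degree $\leq \lfloor d\alpha \rfloor$ in $t$. Hence $H^0(dD)$ has basis $\{1,t,\ldots,t^{\lfloor d\alpha\rfloor}\}$ and
\[
  S_D \;\cong\; \Bbbk[\Sigma], \qquad \Sigma = \{(c,d) \in \bbZ_{\geq 0}^2 : c \leq d\alpha\},
\]
where $t^c u^d$ corresponds to $(c,d) \in \Sigma$ and the grading is by $d$. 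The theorem then becomes a statement about the graded affine toric ring $\Bbbk[\Sigma]$.

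First I would identify the Hilbert basis (minimal monoid generators) of $\Sigma$. As the intersection of a saturated $2$-dimensional rational cone with $\bbZ^2$, it consists of the lattice points on the boundary of the convex hull of $\Sigma \setminus \{0\}$. The ray $c=0$ contributes only $(0,1)=f_0$, while the classical correspondence between best lower approximations of $\alpha$ and vertices of the upper convex hull (from continued fractions / the Stern--Brocot tree) identifies the remaining Hilbert basis elements as exactly $(c_1,d_1),\ldots,(c_r,d_r)$. This gives the $r+1$ claimed generators.

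Second I would construct the relations. The kernel of $\Bbbk[x_0,\ldots,x_r] \twoheadrightarrow \Bbbk[\Sigma]$ is a toric ideal generated by pure binomials. For each pair $(i,j)$ with $j \geq i+2$, the mediant $(c_i+c_j)/(d_i+d_j)$ lies strictly between $c_i/d_i$ and $c_j/d_j$, so it either coincides with some $c_{h_{ij}}/d_{h_{ij}}$ (with $i < h_{ij} < j$) or lies strictly between two consecutive slopes $c_{h_{ij}}/d_{h_{ij}}, c_{h_{ij}+1}/d_{h_{ij}+1}$ (with $i < h_{ij} < h_{ij}+1 < j$). Unimodularity $|c_h d_{h+1} - c_{h+1} d_h| = 1$ for consecutive best lower approximations then yields a unique decomposition of $(c_i+c_j, d_i+d_j)$ as $a_{ij}(c_{h_{ij}}, d_{h_{ij}})$ in the first case, or as $a_{ij}(c_{h_{ij}}, d_{h_{ij}}) + b_{ij}(c_{h_{ij}+1}, d_{h_{ij}+1})$ in the second, with positive integer coefficients; this produces the binomial $g_{ij}$ of \eqref{eq:1ptrel_P1}.

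The main obstacle is showing that these $\binom{r}{2}$ binomials in fact \emph{generate} the entire toric ideal (and are minimal), not merely that they lie in it. I would argue via a Hilbert-function comparison: let $J = \langle g_{ij} \rangle \subseteq I$, so $\Bbbk[x_0,\ldots,x_r]/J$ surjects onto $\Bbbk[\Sigma]$. Taking a lex order with $x_0 < \cdots < x_r$, the leading monomials of the $g_{ij}$ are exactly $x_i x_j$ for $j \geq i+2$, and the standard monomials modulo $\mathrm{in}(J)$ are the \emph{staircase} monomials $x_h^a x_{h+1}^b$ supported on consecutive generators. These biject with $\Sigma$ by the wedge decomposition of the previous step, so $\dim_\Bbbk (\Bbbk[x]/J)_d \leq \lfloor d\alpha\rfloor + 1 = \dim_\Bbbk \Bbbk[\Sigma]_d$, forcing $J = I$ graded-piece by graded-piece. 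Minimality of the $r+1$ generators is immediate from their extremality in $\Sigma$, and minimality of the $\binom{r}{2}$ relations follows because removing any $g_{ij}$ would allow $x_i x_j$ to reduce to a second staircase monomial, breaking the bijection.
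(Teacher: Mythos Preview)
This theorem is not proved in the present paper; it is quoted from \cite{O'Dorney} as background, so there is no in-paper argument to compare against. That said, your proposal is correct and is exactly the toric approach one expects. The identification $S_D\cong\Bbbk[\Sigma]$, the Hilbert-basis computation via the continued-fraction description of the boundary of $\operatorname{conv}(\Sigma\setminus\{0\})$, the unimodularity $c_{h+1}d_h-c_h d_{h+1}=1$ giving the wedge decomposition $v_i+v_j=a v_h+b v_{h+1}$, and the Hilbert-function comparison showing the $g_{ij}$ form a Gr\"obner basis (standard monomials $=$ staircase monomials $\leftrightarrow$ lattice points of $\Sigma$) are all correct and match what the paper later invokes when it says ``in the course of proving \cite[Theorem~4]{O'Dorney}, it was shown that the irreducibles of $M_0$ are precisely the vectors $(d_i,c_i)$.'' Your check that $i<h$ (and $h+1<j$ in the two-term case) is implicit but goes through because $v_j$ (resp.\ $v_i$) is irreducible.

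One small tightening: your minimality argument for the relations is phrased in terms of Gr\"obner reduction, which only shows the remaining $g_{kl}$ fail to be a Gr\"obner basis, not that they fail to generate $I$. The clean argument is that the monomial $x_ix_j$ (with $j\geq i+2$) cannot appear in any product $p\cdot g_{kl}$ with $(k,l)\neq(i,j)$: neither $x_kx_l$ nor any staircase monomial $x_h^a x_{h+1}^b$ can divide $x_ix_j$ when $i$ and $j$ are nonadjacent. Hence $g_{ij}\notin\langle g_{kl}:(k,l)\neq(i,j)\rangle$. This is the same device the paper uses later (in the proof of Theorem~\ref{thm:1point_rels}) to certify minimality of the quadratic-led relations in genus~$1$.
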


    In general, we show that the section ring on an elliptic curve $C$ has a very similar form to that on $\bbP^1$, with necessary modifications owing to the issue that there is no rational function of degree $1$ on $C$. Unlike the genus $0$ case, where the relations are explicit binomials, here the relations have lower-order terms depending in a complicated way on the particular elliptic curve $C$ chosen. Hence we content ourselves with determining the \emph{leading terms,} and in particular the degrees, of the relations.
    
    \begin{theorem}[see Theorems \ref{thm:1point_gens}, \ref{thm:1point_rels}] \label{thm:1point_intro}
        Let $C$ be an elliptic curve with a marked point $\infty$, and denote by $t_c$ ($c \in \bbZ_{\geq 0}, c \neq 1$) a function whose polar divisor is $c(\infty)$. For $D = \alpha(\infty)$ a $\bbQ$-divisor supported at this point, let
        \[
        0 = \frac{c_0}{d_0} < \frac{c_1}{d_1} < \cdots < \frac{c_r}{d_r} = \alpha
        \]
        be the nonnegative best approximations to $\alpha$. Then:
        \begin{enumerate}[$($a$)$]
            \item $S_D$ has a minimal generating set consisting of the functions 
            \begin{itemize}
                \item $f_i = t_{c_i} u^{d_i}$ for $i \neq 1$ (observe that $c_1/d_1$ is always the unique best lower approximation with numerator $1$, and therefore inadmissible here);
                \item $f_\ib = t_2 u^{\ceil{2/\alpha}}$ if $\{-1/\alpha\} \in [0,1/2)$;
                \item $f_\ic = t_3 u^{\ceil{3/\alpha}}$ if $\{-1/\alpha\} \in [0,1/3)$;
                \item $f_\id = t_{c_1 + c_2} u^{d_1 + d_2}$ if $\{-1/\alpha\} \in (0,1/2)$.
            \end{itemize}
            \item With respect to a suitable term ordering, a Gr\"obner basis for the relations of $S_D$ consists of relations with the following leading terms:
            \begin{itemize}
                \item All products $f_i f_j$, where $3 \leq i \leq r, 0 \leq j \leq i - 2, j \neq 1$, except possibly $f_3 f_0$;
                \item All products $f_i f_\ib$, $f_i f_\ic$, and $f_i f_\id$, where $3 \leq i \leq r$, if the respective generator $f_\ib$, $f_\ic$, $f_\id$ exists;
                \item At most six additional relations, according to whether $\{-1/\alpha\}$ lies in various intervals.
            \end{itemize}
            \item Moreover, the Gr\"obner basis is also minimal, except for at most one relation whose leading term has degree $3$ or $4$, the remaining relations having quadratic leading terms.
        \end{enumerate} 
    \end{theorem}
    
    Our next theorem generalizes this to a two-point effective divisor:
    \begin{theorem}[see Theorems \ref{thm:2point_gens}, \ref{thm:2point_rels_unequal}, \ref{thm:2point_rels_equal}] \label{thm:2point_intro}
        Let $D = \alpha^{(1)}(P^{(1)}) + \alpha^{(2)}(P^{(2)})$ be an effective $\bbQ$-divisor on an elliptic curve $C$ supported on two points $P^{(k)}$, with $\alpha^{(1)} \geq \alpha^{(2)}$. Let 
        \[
        0 = \frac{c_0^{(k)}}{d_0^{(k)}} < \frac{c_1^{(k)}}{d_1^{(k)}} < \cdots < \frac{c_{r^{(k)}}^{(k)}}{d_{r^{(k)}}^{(k)}} = \alpha^{(k)}
        \]
        be the best lower approximations to $\alpha^{(k)}$. Then:
        \begin{enumerate}[$($a$)$]
            \item $S_D$ has a minimal system of generators of the following forms:
            \begin{itemize}
                \item $f_i^{(k)} = t_{c_i^{(k)}}^{(k)} u^{d_i^{(k)}}$ for $k \in \{1, 2\}$ and $i = 0, 2, 3, \ldots, r^{(k)}$ (including $f_0 = f_0^{(1)} = f_0^{(2)} = u$);
                \item $f_\ib = t_{2}^{(1)} u^{\ceil{2/\alpha^{(1)}}}$ if $\{-1/\alpha^{(1)}\} \in [0,1/2)$;
                \item $f_\ic = t_{3}^{(1)} u^{\ceil{3/\alpha^{(1)}}}$ if $\{-1/\alpha^{(1)}\} \in [0,1/3) \union [1/2, 2/3)$ and $\ceil{1/\alpha^{(2)}} > \ceil{1/\alpha^{(1)}}$;
                \item $f_\id = t_{c_1^{(1)} + c_2^{(2)}}^{(1)} u^{d_1^{(1)} + d_2^{(1)}}$, if $\{-1/\alpha^{(1)}\} \in (0,1/2)$ and $\ceil{1/\alpha^{(2)}} > \ceil{1/\alpha^{(1)}}$;
                \item $f_w = w u^{d_1^{(2)}}$.
            \end{itemize}
            \item With respect to a suitable term ordering, a Gr\"obner basis for the relations of $S_D$ consists of relations with the following leading terms:
            \begin{itemize}
                \item All products $f_i^{(k)} f_j^{(k)}$, where $k \in \{1,2\}$, $3 \leq i \leq r^{(k)}, 0 \leq j \leq i - 2, j \neq 1$, except possibly $f_3^{(1)} f_0^{(1)}$;
                \item All products $f_w f_i^{(k)}$, where $i \geq 3$, $k \in \{1,2\}$;
                \item All products $f_i f_\ib$, $f_i f_\ic$, and $f_i f_\id$, where $k \in \{1,2\}$, $i \geq 4 - k$, if the respective generator $f_\ib$, $f_\ic$, $f_\id$ exists;
                \item At most nine additional relations, according to whether $\{-1/\alpha^{(1)}\}$ lies in various intervals and whether $\ceil{1/\alpha^{(1)}} = \ceil{1/\alpha^{(2)}}$ or not.
            \end{itemize}
            \item Moreover, the Gr\"obner basis is also minimal, except for at most one relation whose leading term has degree $3$ or $4$, the remaining relations having quadratic leading terms.
        \end{enumerate}
    \end{theorem}

    \begin{comment}

    Let $\displaystyle{D = \sum\limits_{i=1}^n \frac{\alpha_i}{\beta_i}(P_i)}$ be an effective $\bbQ$-divisor on an elliptic curve and assume without loss of generality that $\displaystyle{\frac{\alpha_1}{\beta_1}\geq \frac{\alpha_2}{\beta_2}\geq\cdots\geq\frac{\alpha_n}{\beta_n}}$. We consider the case when $n\geq 3.$ Thanks to \cite[Lemma 4.4(c)]{Landesman-Ruhm-Zhang-Spin-canonical-rings} we can bound the degrees of generators of the section ring of $D$ as follows.
    
    \begin{theorem}[\cite{Landesman-Ruhm-Zhang-Spin-canonical-rings}, Lemma 4.4(c)]
        Let $X$ be a genus $1$ curve and let $D'$ be an effective $\bbQ$-divisor on $X.$ Suppose that $P$ is not in the support of $D'$ and suppose that $\displaystyle{D=D'+\frac{\alpha}{\beta}P}$ for some $\alpha,$ $\beta\in \bbN$ such that $\displaystyle{\frac{\alpha}{\beta}}$ is reduced. Suppose that $S_{D'}$ is generated by some $f_0,\ldots, f_m$ and let $\tau=\max\{1, \deg(f_i)\}.$ Then $S_D$ is generated over $S_{D'}$ in degrees up to $\beta$, and the relation ideal $I$ of $S_D$ is generated over the relation ideal $I'$ of $S_{D'}$ in degrees up to $\max\{2\beta,\beta+\tau\}.$ 
    \end{theorem}
    \end{comment}

    Finally, as a corollary to Lemma 4.4(c) of \cite{Landesman-Ruhm-Zhang-Spin-canonical-rings}, we obtain a bound on the generator and relation degrees for arbitrary effective $\bbQ$-divisors:
    \begin{theorem} \label{thm:bounds_intro}
      Let
      \[
        D = \sum_{i=1}^n \alpha^{(i)}(P^{(i)})
      \]
      be an effective divisor on a genus $1$ curve $C$, with the coefficients $\alpha^{(i)} = a^{(i)}/b^{(i)}$ in reduced form and the distinct points $P^{(i)}$ ordered so that
      \[
        \alpha^{(1)} \geq \cdots \geq \alpha^{(n)}.
      \]
      Then the section ring $S_D$ is generated in degrees at most
      \[
        B = \max\{3b^{(1)}, b^{(2)}, \ldots, b^{(n)}\},
      \]
      with relations in degrees at most $2B$.
    \end{theorem}
    These bounds are achievable: see Examples \ref{ex:Wei} and \ref{ex:3point}.
    
    \subsection{Acknowledgements} This project grew out of the AMS Mathematical Research Communities: Explicit Computations with Stacks conference in June 2023. We thank the organizers, and we further thank John Voight, Robin Zhang and David Zureick-Brown for helpful conversations.
    
    \section{The one-point case}
    \label{sec:1point}
    
    Fix an elliptic curve $C$ with a marked point $\infty$. We denote by $t_i$ a function on $C$ whose polar divisor is $i(\infty)$. We recall that $t_i$ exists for $i \in \bbZ_{\geq 2} \union \{0\}$; if $C$ is given by a Weierstrass equation $y^2 + a_1xy + a_3y = x^3 + a_2x^2 + a_4x + a_6$, then we may take
    \[
    t_i = \begin{cases}
        x^{i/2}, & \text{$i$ even} \\
        x^{(i-3)/2}y, & \text{$i$ odd.}
    \end{cases}
    \]
    
    In this section, we take a divisor $D = \alpha(\infty)$ and study the generators and relations of the resulting section ring $S_D$. Observe that there must be at least three generators and one relation (if $S_D$ were freely generated, it would yield a birational isomorphism of $C$ to some $\bbP^n$, which is impossible).
    
    \begin{example}\label{ex:Wei}
        Let $D = (\infty)$ consist of a single point with multiplicity $1$. Then $S_D$ has generators $u$, $x = u^2 t_2$, $y = u^3 t_3$ in degrees $1$, $2$, and $3$, respectively, and a single degree $6$ relation
        \[
        y^2 + a_1 u x y + a_3 u^3 y = x^3 + a_2 u^2 x^2 + a_4 u^4 x + a_6 u^6,
        \]
        a homogenization of the usual Weierstrass equation of the elliptic curve $C$. These generators are shown diagrammatically in Figure \ref{fig:Wei}, where we plot degree on the horizontal axis and pole order on the vertical axis. We use bullets for generators, open dots for other elements of $S_D$, and $+$'s to emphasize the nonexistence of elements in $S_D$ having a simple pole at $\infty$.
    \end{example}
    
    \begin{figure}[bhtp]
        \centering
        \begin{picture} (4.8,4.8) (-0.3,-0.3) % Total dimensions, then lower left corner
            
            % Axes
            \drawline(0,0)(4.8,0)
            \drawline(0,0)(0,4.8)
            
            % Boundary line
            \drawline(0,0)(4.5,4.5)
            
            % Solid dots
            \solid{(1,0)}\solid{(2,2)}\solid{(3,3)}
            % Dot labels
            \put(1.2,0.1){\makebox(0,0)[bl]{\small $u$}}
            \put(2.2,1.9){\makebox(0,0)[ul]{\small $x$}}
            \put(3.2,2.9){\makebox(0,0)[ul]{\small $y$}}
            % +'s
            \xdot{(1,1)}\xdot{(2,1)}\xdot{(3,1)}\xdot{(4,1)}
            % Open dots
            \open{(0,0)}
            \open{(2,0)}
            \open{(3,0)}\open{(3,2)}
            \open{(4,0)}\open{(4,2)}\open{(4,3)}\open{(4,4)}
        \end{picture}
        \caption{The section ring of $D = (P)$, which has three generators}
        \label{fig:Wei}
    \end{figure}
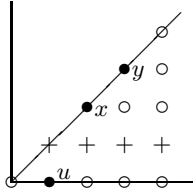
    
    In the following, we use without comment the following well-known characterization of the principal divisors on an elliptic curve $C$:
    \begin{fact}(\cite[Corollary $\mathrm{III}.3.5$]{Silverman-arithmetic-elliptic-curves})
        \label{fact: principal divisors on elliptic curves}
        A divisor $D=\sum n_P(P)$ on an elliptic curve is a principal divisor if and only if $\sum n_P=0$ (as integers) and $\sum (n_P P)=0$ (in the elliptic curve group law).
    \end{fact}
    
    \subsection{Generators}
    \begin{theorem} \label{thm:1point_gens}
        Let $D = \alpha(\infty)$ be a $\bbQ$-divisor on an elliptic curve $C$ supported at a single point $\infty$. Let
        \[
        0 = \frac{c_0}{d_0} < \frac{c_1}{d_1} < \cdots < \frac{c_r}{d_r} = \alpha
        \]
        be the nonnegative best approximations to $\alpha$. Then $S_D$ has a minimal generating set consisting of functions $f = t_c u^d$ for the following pairs $(d, c)$:
        \begin{enumerate}[$($a$)$]
            \item%[$(i)$]
            \label{it:best} $(d, c) = (d_i, c_i)$ for $i \neq 1$ (observe that $c_1/d_1$ is always the unique best lower approximation with numerator $1$, and therefore inadmissible here);
            \item%[$(=)$]
            \label{it:ord2} $(d, c) = (d_\ib, c_\ib) = (\ceil{2/\alpha}, 2)$ if $\{-1/\alpha\} \in [0,1/2)$; % this pair has not already appeared;
            \item%[$(\equiv)$]
            \label{it:ord3} $(d, c) = (d_\ic, c_\ic) = (\ceil{3/\alpha}, 3)$ if $\{-1/\alpha\} \in [0,1/3) \union [1/2,2/3)$; %this pair has not already appeared;
            \item%[$(+)$]
            \label{it:c1+c2} $(d, c) = (d_\id, c_\id) = (d_1 + d_2, c_1 + c_2)$ if $\{-1/\alpha\} \in (0,1/2)$. % $r \geq 2$ and a pair of the form $(d, c_1 + c_2)$ has not already appeared.
        \end{enumerate}
        We denote each relevant lattice point by $v_i = (d_i, c_i)$, and the corresponding generator of the section ring by $f_i = t_{c_i} u^{d_i}$, where $i$ ranges over an index set $I$ containing $\{0, 2, 3,\ldots, r\}$ as well as the special symbols \ref{it:ord2}, \ref{it:ord3}, and \ref{it:c1+c2} in the cases in which they appear. 
    \end{theorem}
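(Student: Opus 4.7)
The plan is to reduce the question to a combinatorial problem about a sub-semigroup of $\bbZ_{\geq 0}^2$ and identify its minimal generators via the theory of best rational approximations. I would first use Riemann-Roch on the genus-$1$ curve $C$: for any $n \geq 1$, $h^0(n(\infty)) = n$, with basis $\{1, t_2, t_3, \ldots, t_n\}$, where $t_1$ is absent because an elliptic curve carries no meromorphic function with a single simple pole. Consequently $u^d H^0(dD) = u^d H^0(\lfloor d\alpha\rfloor(\infty))$ has basis $\{u^d t_c : c \in A_d\}$, where $A_d = \{0\} \cup \{2, 3, \ldots, \lfloor d\alpha\rfloor\}$. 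Call a pair $(d, c) \in \bbZ_{\geq 0}^2$ \emph{admissible} when $c \in A_d$. Since $\lfloor x\rfloor + \lfloor y\rfloor \leq \lfloor x+y\rfloor$ and $\{0\} \cup \bbZ_{\geq 2}$ is closed under addition, the admissible pairs form a sub-semigroup $\Lambda \subseteq \bbZ_{\geq 0}^2$.

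The crucial ring-theoretic input is the identity $u^{d_1} t_{c_1} \cdot u^{d_2} t_{c_2} = u^{d_1+d_2}(t_{c_1+c_2} + \text{lower order at }\infty)$. A triangular change of basis then shows that a subset $G \subseteq \Lambda \setminus \{(0,0)\}$ generates $S_D$ as a $\Bbbk$-algebra if and only if $G$ generates $\Lambda$ as a semigroup, that is, every admissible pair is a nonnegative integer combination of elements of $G$. The theorem thereby reduces to identifying the indecomposable elements of $\Lambda$.

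For sufficiency I would use the Bezout identity $d_i c_{i+1} - d_{i+1} c_i = \pm 1$ for consecutive best lower approximations to decompose any admissible $(d, c)$ beneath the boundary line $c = d\alpha$ as a nonnegative combination of vertices $v_i = (d_i, c_i)$ together with $(1,0)$. If $c_1 \geq 2$ this is accomplished using only the generators in (\ref{it:best}). If $c_1 = 1$ then $v_1$ is forbidden, and certain low-$c$ points become unreachable; the supplementary points in (\ib)--(\id) are exactly the first admissible lattice points at pole orders $2$, $3$, and $c_1 + c_2$ that are stranded without $v_1$. For minimality, the (a)-generators are vertices of the convex hull of $\Lambda$ and hence indecomposable; for (\ib)--(\id), direct arithmetic shows that under the stated fractional-part condition any candidate decomposition would require a summand in the forbidden row $c = 1$, which is impossible.

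The main obstacle is the case analysis linking $\{-1/\alpha\} \in [0,1)$ to exactly which of (\ib), (\ic), (\id) applies. The thresholds $1/3, 1/2, 2/3$ arise from comparing $2/\alpha$ and $3/\alpha$ to their integer rounded values, i.e., whether the first admissible lattice points at pole orders $c = 2$ and $c = 3$ are themselves best approximations or lie strictly interior to the convex hull. The most delicate step will be case (\id): verifying that when $\{-1/\alpha\} \in (0, 1/2)$ the mediant $(d_1 + d_2, c_1 + c_2)$ is both indecomposable and necessary for full coverage of $\Lambda$, essentially because it supplies the only admissible replacement for the missing vertex $v_1$.
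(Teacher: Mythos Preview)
Your reduction to the monoid
\[
  \Lambda = M = \{(d,c) \in \bbZ_{\geq 0}^2 : 0 \leq c \leq \alpha d,\ c \neq 1\}
\]
and the use of best lower approximations matches the paper's approach, and the ``if'' direction (semigroup generators $\Rightarrow$ algebra generators) via triangular leading terms is correct.  The gap is in your claimed equivalence.

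You assert that $G$ generates $S_D$ as a $\Bbbk$-algebra \emph{if and only if} $G$ generates $\Lambda$ as a semigroup, and hence that minimality in $S_D$ reduces to irreducibility in $\Lambda$.  The ``only if'' direction does \emph{not} follow from a triangular change of basis.  Concretely: if $v = (d,c)$ is irreducible in $\Lambda$, one must still rule out that $f_v = t_c u^d$ is a $\Bbbk$-linear combination of products of the other $f_{v'}$.  Each such product has leading pole order $c'\neq c$, but two distinct products could share the \emph{same} leading order $c'>c$, and a suitable linear combination of them could cancel the top pole and land exactly at order $c$.  Nothing in ``triangularity'' forbids this; in general, for a filtered algebra $A$ it is false that $G$ generating $A$ forces $\{\operatorname{in}(g):g\in G\}$ to generate $\operatorname{gr}(A)$ (e.g.\ $A=\Bbbk[t]$ with $G=\{t^2,\,t^2+t\}$).

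The paper closes exactly this gap with an additional combinatorial lemma (Lemma~\ref{lem:unique}): for every irreducible $(d,c)\in M$, any $(d,c')\in M$ with $c'>c$ has a \emph{unique} atomic decomposition.  Uniqueness means there is only one monomial in the other generators with leading pole order $c'$ in degree $d$, so no cancellation is available, and $f_v$ is genuinely indispensable.  Your outline needs either this lemma or an equivalent argument; as written, the minimality half is unjustified.

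A minor point: you write ``if $c_1 \geq 2$,'' but $c_1/d_1$ is always the best lower approximation with numerator $1$, so $c_1 = 1$ in every case.  The case split is not between $c_1=1$ and $c_1\geq 2$, but rather according to where $\{-1/\alpha\}$ falls, which governs whether the extra vectors of types \ref{it:ord2}--\ref{it:c1+c2} coincide with best-approximation vectors already present.
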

    
    \begin{proof}
        First, we transform the problem to finding generators for a certain semigroup. Observe that a $\Bbbk$-basis for $S_D$ is given by
        \begin{equation} \label{eq:M_basis}
            \{t_c u^d : (d,c) \in M\}
        \end{equation}
        where $M$ is the monoid
        \[
        M = \{(d,c) \in \bbZ^2 : 0 \leq c \leq \alpha d, c \neq 1\}.
        \]
        For $v = (d,c) \in M$ a vector, let $f_v = t_c u^d$ be the corresponding element of $S_D$. We cannot construct an isomorphism of $S_D$ with the monoid ring $\Bbbk[M]$ in this way, but the objects are closely related, and we will use the combinatorial structure of $M$ to probe the algebraic structure of $S_D$.
        
        Note that, owing to the grading by $d$, $M$ is an \textbf{atomic} monoid, that is, every element is a (not necessarily unique) sum of irreducibles. Consequently, $M$ has a unique minimal generating set, namely the irreducibles. Suppose that the following combinatorial lemmas about $M$ are proved:
        
        \begin{lemma}\label{lem:irred_M}
            The irreducibles of $M$ are exactly the pairs $(d,c)$ in the statement of the theorem.
        \end{lemma}
        
        \begin{lemma}\label{lem:unique}
            Let $(d,c)$ be an irreducible of $M$. Then any element $(d,c') \in M$ with $c' > c$ has a unique atomic decomposition.
        \end{lemma}
        
        Let us show that these two lemmas imply the statement of the theorem. Let $\cP$ be the set of irreducibles of $M$. Since $\cP$ generates $M$, the corresponding generating set $\{f_v : v\in \cP\}$ generates a subring $S' \subseteq S_D$ containing elements $t^{(d)}_c u^d$ for all $(d,c) \in M$ (where $t^{(d)}_c$ is a function on $C$ with a pole of order $c$ at $\infty$, possibly depending on $d$). These elements span $S_D$ as a $\Bbbk$-vector space, so $S' = S_D$.
        
        Now we show that each generator $f_v = t_c u^d$ is necessary. Let $S' \subseteq S_D$ be the subring generated by the $f_{v'}$, $v' \neq v$, and suppose for the sake of contradiction that $f_v \in S'$. Write
        \[
        f_v = a_1 f_1 + \cdots + a_k f_k,
        \]
        where $a_i \in \Bbbk$ and the $f_i \in S'_{\deg = d}$ are distinct products of the generators of $S'$. Since $v \in M$ is irreducible, no $f_i$ can have a pole of order exactly $c$, so two of them, say $f_1$ and $f_2$, must have a common larger order $c' > c$ to cancel the poles out. But by Lemma \ref{lem:unique}, this is impossible.
    \end{proof}
    
    \begin{proof}[Proof of Lemma \ref{lem:irred_M}]
        To understand the structure of $M$, we compare it to the simpler monoid
        \[
        M_0 = \{(d,c) \in \bbZ^2 : 0 \leq c \leq \alpha d\}
        \]
        in which the condition $c \neq 1$ has been omitted. This monoid controls the structure of the section ring for the corresponding situation in genus zero, and in the course of proving \cite[Theorem 4]{O'Dorney}, it was shown that the irreducibles of $M_0$ are precisely the vectors $(d_i,c_i)$ determined by the best lower approximations $c_i/d_i$.
        
        Since $M \subset M_0$, all such vectors remain irreducible in $M$ if they lie in $M$. Thus the vectors of type \ref{it:best} in Theorem \ref{thm:1point_gens} are irreducible. For types \ref{it:ord2} and \ref{it:ord3}, note that these are the simplest vectors in $M$ with $c$-coordinate $2$ and $3$, respectively, and cannot be decomposed, since $M$ has no elements with $c$-coordinate $1$. Thus they must be added unless they already appeared in type \ref{it:best}. For type \ref{it:ord2}, we have that $c/d = 2/\ceil{2/\alpha}$ is a best lower approximation (necessarily the second one $c_2/d_2$) if and only if
        \begin{align}
            \frac{2}{\ceil{2/\alpha}} &> \frac{1}{\ceil{1/\alpha}} \nonumber \\
            2 \Ceil{\frac{1}{\alpha}} &> \Ceil{\frac{2}{\alpha}} \nonumber \\
            \frac{2}{\alpha} + 2\left\{-\frac{1}{\alpha}\right\} &>
            \frac{2}{\alpha} + \left\{-\frac{2}{\alpha}\right\}\nonumber \\
            2\left\{-\frac{1}{\alpha}\right\} &> \left\{-\frac{2}{\alpha}\right\}. \label{eq:2_frac_parts}
        \end{align}
        Since
        \[
        \{2x\} = \begin{cases}
            2\{x\} & \{x\} < 1/2 \\
            2\{x\} - 1 & \{x\} \geq 1/2,
        \end{cases}
        \]
        the inequality \eqref{eq:2_frac_parts} holds exactly when $\{-1/\alpha\} \geq 1/2$, so the generator of type \ref{it:ord2} is needed whenever $\{-1/\alpha\} < 1/2$. For type \ref{it:ord3}, an analogous computation shows that $\{-1/\alpha\}$ must lie in the range $[0,1/3) \union [1/2,2/3)$ for $3/\ceil{3/\alpha}$ not to have already appeared as a best lower approximation.
        
        Finally, for type \ref{it:c1+c2}, note that if $-1/\alpha$ is an integer, then there is no $c_2/d_2$ because $c_1/d_1 = \alpha$ is the last approximation; while if $\{-1/\alpha\} \geq -1/2$, then $c_2 = 2$ as we found above, so $c_1 + c_2 = 3$, which pole order was already covered in types \ref{it:best} and \ref{it:ord3}. So we only need to consider type \ref{it:c1+c2} in the case $\{-1/\alpha\} \in (0,1/2)$. Here $c_2 \geq 3$ and $c_3$ (if it exists) is greater than $1 + c_2$, so the only irreducibles that could possibly appear in a decomposition of $v = (d_1 + d_2, c_1 + c_2)$ are
        \[
        (d_0, c_0) = (1,0), \quad (\ceil{2/\alpha}, 2), \quad (\ceil{3/\alpha}, 3), \quad (d_2, c_2).
        \]
        The last generator $(d_2, c_2)$ may be eliminated immediately since the difference $v - (d_2, c_2) = (d_1, 1)$ lies outside $M$. That leaves three generators lying in the submonoid
        \[
        \angle v_0v_1 = \langle(1,0), (d_1,1)\rangle = \langle(d_0,c_0), (d_1,c_1)\rangle
        = \left\{(d,c) \in \bbZ^2 : 0 \leq c \leq \frac{c_1}{d_1} d \right\}
        \]
        of $M_0$ determined by the first two best lower approximations of $\alpha$. But $v$ lies outside $\angle v_0v_1$ since $c_2/d_2 > c_1/d_1$, so $v$ is irreducible in $M$. This completes the proof that the claimed generators are irreducible and distinct.
        
        It remains to prove that there are no other irreducibles, that is, any nonzero vector $v \in M$ not among the ones listed is reducible in $M$. As an element of $M_0$, any $v$ lies in some angle $\angle v_i v_{i+1}$ and so can be decomposed as a positive integer linear combination
        \[
        v = a(d_{i}, c_{i}) + b(d_{i+1}, c_{i+1})
        \]
        of two consecutive generators of $M_0$. If $i \geq 2$, then these are also generators of $M$, so we only need to consider two cases:
        
        \begin{case}
            $i = 0$. Then $(d_0, c_0) = (1,0)$ is already a generator of $M$. We must have $b \neq 1$ since $v \in M$, so $b$ can be written as a sum of $2$'s and $3$'s, which yields an expression for $v$ in terms of the generators of types \ref{it:best}, \ref{it:ord2}, and \ref{it:ord3}.
        \end{case}
        
        \begin{case}
            $i = 1$, so
            \begin{equation} \label{eq:i=1}
                v = a(d_1, c_1) + b(d_2, c_2).
            \end{equation}
            We may assume that $a$ and $b$ are nonzero, or else we could have taken $i = 0$ or $i = 2$ respectively (in the latter case, allowing a zero coefficient on a possibly nonexistent $(d_3, c_3)$). If $a \neq 1$, note that each term individually belongs to $M$, so $v$ is reducible. So $a = 1$ and $b \geq 1$, and 
            \[
            v = (d_1 + d_2, c_1 + c_2) + (b-1)(d_2, c_2)    
            \]
            is either reducible or a generator of type \ref{it:c1+c2}. \qedhere
        \end{case}
    \end{proof}
    \begin{proof}[Proof of Lemma \ref{lem:unique}]
        Now let $v = (d,c)$ be an irreducible of $M$. We wish to prove that any element $v' = (d,c') \in M$ lying above $v$ has a unique atomic decomposition. By the previous lemma, $v$ is of one of the four types in Theorem \ref{thm:1point_gens}; we handle each type in turn. In Figure \ref{fig:unique}, we illustrate the various cases that can occur.
        
        \begin{figure}[bhtp]
            \centering
            \begin{picture}(4.5,7.7)(-0.3,-1)
                % Axes
                \drawline(0,0)(4.5,0)
                \drawline(0,0)(0,6.3)
                
                % Boundary line
                \drawline(0,0)(4.5,6.3)
                
                % Solid dots
                \solid{(1,0)}\solid{(2,2)}\solid{(3,3)}\solid{(3,4)}\solid{(4,5)}
                % Dot labels
                \put(1.2,0.1){\makebox(0,0)[bl]{\small $a$}}
                \put(2.2,1.9){\makebox(0,0)[ul]{\small $b$}}
                \put(3.2,2.9){\makebox(0,0)[ul]{\small $c$}}
                \put(3.2,3.9){\makebox(0,0)[ul]{\small $a$}}
                \put(4.2,4.9){\makebox(0,0)[ul]{\small $d$}}
                
                % +'s
                \xdot{(1,1)}\xdot{(2,1)}\xdot{(3,1)}\xdot{(4,1)}
                
                % Open dots
                \open{(0,0)}
                \open{(2,0)}
                \open{(3,0)}\open{(3,2)}
                \open{(4,0)}\open{(4,2)}\open{(4,3)}\open{(4,4)}
                
                %label
                \put(0,-1){\makebox(4.5,0){$4/3 \leq \alpha < 3/2$}}
            \end{picture}\quad
            \begin{picture}(4.5,7.7)(-0.3,-1)
                % Axes
                \drawline(0,0)(4.5,0)
                \drawline(0,0)(0,7.2)
                
                % Boundary line
                \drawline(0,0)(4.5,7.2)
                
                % Solid dots (1,0)(2,2)(2,3)(3,4)
                \solid{(1,0)}\solid{(2,2)}\solid{(2,3)}\solid{(3,4)}
                % Dot labels
                \put(1.2,0.1){\makebox(0,0)[bl]{\small $a$}} %(1.2,0.1){\small \textbf a}
                \put(2.2,1.9){\makebox(0,0)[ul]{\small $b$}} %(2.2,1.9){\small \textbf b}
                \put(2.2,2.9){\makebox(0,0)[ul]{\small $a$}} %(2.2,2.9){\small \textbf a}
                \put(3.2,3.9){\makebox(0,0)[ul]{\small $d$}} %(3.2,3.9){\small \textbf d}
                
                % +'s 
                \xdot{(1,1)}\xdot{(2,1)}\xdot{(3,1)}\xdot{(4,1)}
                
                % Open dots
                \open{(0,0)}
                \open{(2,0)}
                \open{(3,0)}\open{(3,2)}\open{(3,3)}
                \open{(4,0)}\open{(4,2)}\open{(4,3)}\open{(4,4)}\open{(4,5)}\open{(4,6)}
                
                %label
                \put(0,-1){\makebox(4.5,0){$3/2 \leq \alpha < 5/3$}}
            \end{picture}\quad
            \begin{picture}(4.5,7.7)(-0.3,-1)
                % Axes
                \drawline(0,0)(4.5,0)
                \drawline(0,0)(0,7.7)
                
                % Boundary line
                \drawline(0,0)(4.5,7.7)
                
                % Solid dots  (1,0)(2,2)(2,3)(3,4)(3,5)
                \solid{(1,0)}\solid{(2,2)}\solid{(2,3)}\solid{(3,4)}\solid{(3,5)}
                % Dot labels
                \put(1.2,0.1){\makebox(0,0)[bl]{\small $a$}} 
                \put(2.2,1.9){\makebox(0,0)[ul]{\small $b$}}
                \put(2.2,2.9){\makebox(0,0)[ul]{\small $a$}}
                \put(3.2,3.9){\makebox(0,0)[ul]{\small $d$}}
                \put(3.2,4.9){\makebox(0,0)[ul]{\small $a$}}
                
                % +'s 
                \xdot{(1,1)}\xdot{(2,1)}\xdot{(3,1)}\xdot{(4,1)}
                
                % Open dots
                \open{(0,0)}
                \open{(2,0)}
                \open{(3,0)}\open{(3,2)}\open{(3,3)}
                \open{(4,0)}\open{(4,2)}\open{(4,3)}\open{(4,4)}\open{(4,5)}\open{(4,6)}
                
                %label
                \put(0,-1){\makebox(4.5,0){$5/3 \leq \alpha < 2$}}
            \end{picture}\\[5ex]
            \begin{picture}(4.5,9)(-0.3,-1)
                % Axes
                \drawline(0,0)(4.5,0)
                \drawline(0,0)(0,9)
                
                % Boundary line
                \drawline(0,0)(3.7,8.5)
                
                % Solid dots  (1,0)(1,2)(2,3)
                \solid{(1,0)}\solid{(1,2)}\solid{(2,3)}
                % Dot labels
                \put(1.2,0.1){\makebox(0,0)[bl]{\small $a$}} 
                \put(1.2,1.9){\makebox(0,0)[ul]{\small $a$}}
                \put(2.2,2.9){\makebox(0,0)[ul]{\small $c$}}
                
                % +'s 
                \xdot{(1,1)}\xdot{(2,1)}\xdot{(3,1)}\xdot{(4,1)}
                
                % Open dots
                \open{(0,0)}
                \open{(2,0)}\open{(2,2)}\open{(2,4)}
                \open{(3,0)}\open{(3,2)}\open{(3,3)}\open{(3,4)}\open{(3,5)}\open{(3,6)}
                \open{(4,0)}\open{(4,2)}\open{(4,3)}\open{(4,4)}\open{(4,5)}\open{(4,6)}\open{(4,7)}\open{(4,8)}
                
                %label
                \put(0,-1){\makebox(4.5,0){$2 \leq \alpha < 5/2$}}
            \end{picture}\quad
            \begin{picture}(4.5,9)(-0.3,-1)
                % Axes
                \drawline(0,0)(4.5,0)
                \drawline(0,0)(0,8.5)
                
                % Boundary line
                \drawline(0,0)(3.1,8.5)
                
                % Solid dots  (1,0)(1,2)(2,3)(2,5)
                \solid{(1,0)}\solid{(1,2)}\solid{(2,3)}\solid{(2,5)}
                % Dot labels
                \put(1.2,0.1){\makebox(0,0)[bl]{\small $a$}} 
                \put(1.2,1.9){\makebox(0,0)[ul]{\small $a$}}
                \put(2.2,2.9){\makebox(0,0)[ul]{\small $c$}}
                \put(2.2,4.9){\makebox(0,0)[ul]{\small $a$}} 
                
                % +'s 
                \xdot{(1,1)}\xdot{(2,1)}\xdot{(3,1)}\xdot{(4,1)}
                
                % Open dots
                \open{(0,0)}
                \open{(2,0)}\open{(2,2)}\open{(2,4)}
                \open{(3,0)}\open{(3,2)}\open{(3,3)}\open{(3,4)}\open{(3,5)}\open{(3,6)}\open{(3,7)}\open{(3,8)}
                \open{(4,0)}\open{(4,2)}\open{(4,3)}\open{(4,4)}\open{(4,5)}\open{(4,6)}\open{(4,7)}\open{(4,8)}
                
                %label
                \put(0,-1){\makebox(4.5,0){$5/2 \leq \alpha < 3$}}
            \end{picture}\quad
            \begin{picture}(4.5,9)(-0.3,-1)
                % Axes
                \drawline(0,0)(4.5,0)
                \drawline(0,0)(0,8.5)
                
                % Boundary line
                \drawline(0,0)(1.8,8.5)
                
                % Solid dots  (1,0)(1,2)(1,3)(1,4)
                \solid{(1,0)}\solid{(1,2)}\solid{(1,3)}\solid{(1,4)}
                % Dot labels
                \put(1.2,0.1){\makebox(0,0)[bl]{\small $a$}} 
                \put(1.2,1.9){\makebox(0,0)[ul]{\small $a$}}
                \put(1.2,3.1){\makebox(0,0)[ul]{\small $\vdots$}}
                \put(1.2,3.9){\makebox(0,0)[ul]{\small $a$}} 
                
                % +'s 
                \xdot{(1,1)}\xdot{(2,1)}\xdot{(3,1)}\xdot{(4,1)}
                
                % Open dots
                \open{(0,0)}
                \open{(2,0)}\open{(2,2)}\open{(2,3)}\open{(2,4)}\open{(2,5)}\open{(2,6)}\open{(2,7)}\open{(2,8)}
                \open{(3,0)}\open{(3,2)}\open{(3,3)}\open{(3,4)}\open{(3,5)}\open{(3,6)}\open{(3,7)}\open{(3,8)}
                \open{(4,0)}\open{(4,2)}\open{(4,3)}\open{(4,4)}\open{(4,5)}\open{(4,6)}\open{(4,7)}\open{(4,8)}
                
                %label
                \put(0,-1){\makebox(4.5,0){$\alpha \geq 3$}}
            \end{picture}
            \caption{Cases covered by Lemma \ref{lem:unique}, where a generator of $M$ has a point of $M$ directly above it. The bullets indicate generators, annotated with their type (items \emph{a}--\emph{d} of Theorem \ref{thm:1point_gens}).}
            \label{fig:unique}
        \end{figure}
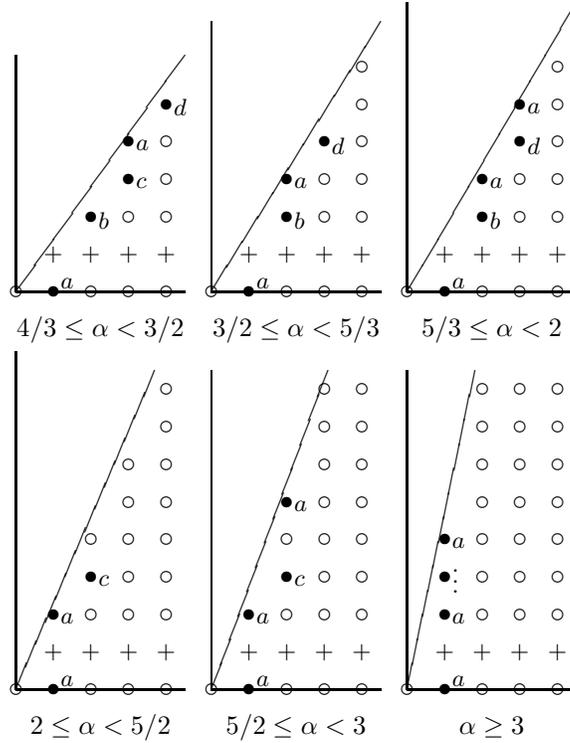

        In type \ref{it:best}, $c/d$ is a best lower approximation to $\alpha$. Then $c'/d$ must also be a best lower approximation to $\alpha$, so $v'$ is also irreducible (note that this can only occur if $d = 1$ and $\alpha \geq 3$).
        
        In type \ref{it:ord2}, for there to be even one point $v' = (\ceil{2/\alpha}, 3)$ in $M$ above the given point $v = (\ceil{2/\alpha}, 2)$, we must have the inequality
        \begin{equation}\label{eq:bd2}
            \frac{3}{\alpha} \leq \Ceil{\frac{2}{\alpha}}.
        \end{equation}
        As the ceiling augments its argument by less than $1$, we must have $\alpha > 1$, so $v$ is either $(1, 2)$ or $(2, 2)$. The first case can be excluded as $v$ is of type \ref{it:best} rather than \ref{it:ord2}. There remains the possibility that $v = (2,2)$ and $v' = (2,3)$, which appears for $3/2 \leq \alpha < 2$ and is also irreducible (of type \ref{it:best}). (Points with $c' \geq 4$ cannot occur here, as then we would have had $\alpha \geq 2$ and $(1,2) \in M$.)
        
        In type \ref{it:ord3}, we analogously find that $\alpha > 1$ and $v$ is either $(2,3)$ or $(3,3)$. Then:
        \begin{itemize}
            \item If $v = (2,3)$, we must have $2 \leq \alpha < 3$, and $v'$ is either $(2,4)$ or $(2,5)$. The vector $v' = (2,4) = 2(1,2)$ has a unique atomic decomposition. If $\alpha \geq 5/2$, then $v' = (2,5)$ is also admissible and irreducible (of type \ref{it:best}).
            \item If $v = (3,3)$, we must have $4/3 \leq \alpha < 3/2$ to get the unique possible $v' = (3,4) \in M$; this $v'$ is irreducible (of type \ref{it:best}).
        \end{itemize}
        
        For type \ref{it:c1+c2}, we first note that, since $c_2/d_2$ is the best lower approximation following $c_1/d_1 = 1/d_1$, we have
        \begin{equation}\label{eq:a_up}
            \frac{c_2}{d_2} = \frac{c_2}{c_2 d_1 - 1} \leq \alpha < \frac{c_2 - 1}{(c_2 - 1) d_1 - 1}.
        \end{equation}
        For a point $v' = (d_1 + d_2, c_1 + c_2 + 1)$ to appear above $v$ in $M$, we must have
        \begin{equation}\label{eq:a_lo}
            \alpha \geq \frac{c_1 + c_2 + 1}{d_1 + d_2} = \frac{c_2 + 2}{d_1(c_2 + 1) - 1}.
        \end{equation}
        Combining \eqref{eq:a_up} and \eqref{eq:a_lo} yields
        \begin{gather*}
            \frac{c_2 + 2}{d_1(c_2 + 1) - 1} < \frac{c_2 - 1}{(c_2 - 1) d_1 - 1}
            \intertext{which simplifies to}
            (c_2 - 1) d_1 < 3.
        \end{gather*}
        Accordingly, $c_2$ and $d_1$ must have their minimum possible values $c_2 = 2$ and $d_1 = 1$. We have $v = (3,4)$, $5/3 \leq \alpha < 2$, and $v' = (3,5)$, which is irreducible of type \ref{it:best}, completing the proof.
    \end{proof}
    
    \subsection{Relations}
    We turn our attention to understanding a set of relations for the section ring $S_D$. We again begin by looking at the genus $0$ case. In Theorem \ref{thm:1point_P1}, we have $\binom{r}{2}$ relations among the $r + 1$ generators, each led by a different quadratic monomial. These form a minimal basis for the relation ideal, as well as a Gr\"obner basis with respect to several of the commonly used term orders, including the \textbf{grevlex} order chosen by default in programs such as Sage and also used in the literature, such as in \cite{VZB}. Having a Gr\"obner basis is desirable for computations, especially if the Gr\"obner basis is also minimal.
    
    In the genus $1$ case, as one might expect, things are a bit more involved, and it is good to choose the term order judiciously so that the Gr\"obner basis is as nearly minimal as possible. The term order we use is as follows:
    
    \begin{definition} \label{def:term_order_1pt}
        Let $\{v_i\}_{i \in I}$ be the generators of $M$ as computed in Theorem \ref{thm:1point_gens}, and let $\{f_i\}_{i \in I}$ be the corresponding generators of $S_D$. Order the index set $I$ in increasing degree $d$ and, within each degree, ordered in increasing pole order $c$; the order of the generators is therefore 
        \begin{align*}
            & f_0 \prec f_\ib \prec f_\ic, & \{-1/\alpha\} &= 0 \\
            & f_0 \prec f_\ib \prec f_\ic \prec f_2 \prec f_\id \prec f_3 \prec \cdots, &
            \{-1/\alpha\} &\in (0,1/3) \\
            & f_0 \prec f_\ib \prec f_2 \prec f_\id \prec f_3 \prec \cdots, & 
            \{-1/\alpha\} &\in [1/3,1/2) \\
            & f_0 \prec f_2 \prec f_\ic \prec f_3 \prec \cdots, &
            \{-1/\alpha\} &\in [1/2,2/3) \\
            & f_0 \prec f_2 \prec f_3 \prec \cdots, & \{-1/\alpha\} &\in [2/3,1).
        \end{align*}
        Given two distinct monomials
        \[
        m^{(1)} = \prod_i v_i^{a_i^{(1)}} \textand m^{(2)} = v_i^{a_i^{(2)}},
        \]
        where $i$ ranges over the indices of all the generators in Theorem \ref{thm:1point_gens}, we declare that either $m^{(1)}$ is lower than $m^{(2)}$, written $m^{(1)} \prec m^{(2)}$, or the reverse $m^{(1)} \succ m^{(2)}$ as follows:
        \begin{enumerate}
            \item First we compare degrees: if
            \[
            \sum_i a_i^{(1)} d_i < \sum_i a_i^{(2)} d_i,
            \]
            then $m^{(1)} \prec m^{(2)}$.
            \item Then we compare pole orders: if the degrees are equal but
            \[
            \sum_i a_i^{(1)} c_i < \sum_i a_i^{(2)} c_i,
            \]
            then $m^{(1)} \prec m^{(2)}$.
            \item If the degrees and pole orders are equal, we compare exponents of the generators, starting from the highest: if $a_i^{(1)} < a_i^{(2)}$ but $a_j^{(1)} = a_j^{(2)}$ for $f_j \succ f_i$, then $m^{(1)} \prec m^{(2)}$.
        \end{enumerate}
    \end{definition}
    
    We can now state our main theorem. As in \cite{VZB}, we cannot list the relations in full detail, but at least we can provide the leading terms.
    \begin{theorem} \label{thm:1point_rels}
        Let $D = \alpha(\infty)$ be a $1$-point divisor. Denote the generators of type \ref{it:best} in Theorem \ref{thm:1point_gens} by
        \[
        f_i = u^{d_i} t_{c_i}, \quad i = 0, 2, 3, \ldots, r,
        \]
        and denote the exceptional generators of type \ref{it:ord2}, \ref{it:ord3}, and \ref{it:c1+c2} by $f_\ib$, $f_\ic$, and $f_\id$, respectively. Then a Gr\"obner basis of the relations of $S_D$ has the following leading terms:
        \begin{enumerate}
            \item All products $f_i f_j$, where $3 \leq i \leq r, 0 \leq j \leq i - 2, j \neq 1$, except possibly $f_3 f_0$ (see below);
            \item All products $f_i f_\ib$, $f_i f_\ic$, and $f_i f_\id$, where $3 \leq i \leq r$, if these exceptional generators exist;
            \item Additional relations, according to the value of $\{-1/\alpha\}$ which also controls the generators:
            \[
            \begin{tabular}{c|l|l}
                $\{-1/\alpha\} \in$ & Exc.\ gens. & Leading terms of relations \\ \hline
                $\{0\}$ & $f_\ib, f_\ic$ & $f_\ic^2$ \\
                $(0,1/3)$ & $f_\ib, f_\ic, f_\id$ & $f_\ic^2, f_\ib f_\id, f_\ic f_\id, f_\id^2, f_0f_\id, f_0f_2$ \\
                $[1/3, 1/2)$ & $f_\ib, f_\id$ & $\boxed{f_0^2f_2^2},
                f_0f_\id, f_\ib f_\id, f_\id^2$ \\
                $[1/2, 2/3)$ & $f_\ic$ & $f_\ic^2$ \\
                $[2/3, 1)$ & --- & $\boxed{f_0f_3^2}$,
                omit $f_0f_3$
            \end{tabular}
            \]
        \end{enumerate}
        Moreover, the relations comprising the Gr\"obner basis are all minimal, with the possible exception of the two cases with a non-quadratic leading term (boxed):
        \begin{itemize}
            \item The relation with the quartic leading term $f_0^2 f_2^2$ is never minimal.
            \item The relation with the cubic leading term $f_0 f_3^2$ is minimal if and only if $\{-1/\alpha\}$ belongs to the subinterval $[2/3, 3/4)$.
        \end{itemize}
        
    \end{theorem}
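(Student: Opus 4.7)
The plan is to handle this in three stages: exhibit actual relations with the listed leading terms, prove they form a Gr\"obner basis by matching Hilbert series, and then analyze minimality case by case.

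For the first stage, I would fix the term order of Definition \ref{def:term_order_1pt} and, for each listed leading term $m = \prod_i f_i^{a_i}$, show that when $m$ is interpreted as an element of $S_D$ it lies in the $\Bbbk$-span of monomials in the generators that are strictly $\prec m$. Using the explicit basis $\{t_c u^d : (d,c)\in M\}$ for $S_d$ from \eqref{eq:M_basis} and the multiplicative rules among the $t_c$ (dictated by the Weierstrass equation and the group law on $C$), each such identity is a finite linear-algebra computation in the graded piece of degree $\sum_i a_i d_i$. This yields a relation with leading term exactly $m$.

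For the second stage, let $J$ be the monomial ideal in $\Bbbk[f_i : i \in I]$ generated by the listed leading terms. The standard monomials (those not in $J$) surject onto $S_d$ in each degree via the evaluation map, so it suffices to match Hilbert series. The right-hand side is $\dim S_d = |\{0\} \cup \{2,3,\ldots,\lfloor\alpha d\rfloor\}|$, while the left-hand side is a combinatorial count of standard monomials carried out separately in each of the five sub-cases for $\{-1/\alpha\}$ of Definition \ref{def:term_order_1pt}. Equality in every degree, which I expect to hold by a direct bijection between standard monomials of degree $d$ and lattice points $(d,c)\in M$, then forces the listed leading terms to generate the full initial ideal, confirming that the relations form a Gr\"obner basis.

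For the third stage, a relation is Gr\"obner-minimal (and hence a minimal ideal generator) unless its leading term is divisible by another listed leading term, which a direct check rules out in all but the two boxed cases. For those, I would show the relation is redundant in the ideal by exhibiting it as a $\Bbbk$-linear combination in which leading terms of products of lower-degree Gr\"obner basis elements cancel, leaving the boxed monomial as the surviving term: for $f_0^2 f_2^2$ the required cancellation is available throughout $\{-1/\alpha\}\in[1/3,1/2)$, while for $f_0 f_3^2$ it exists precisely on $\{-1/\alpha\}\in[3/4,1)$, leaving the relation as a genuine minimal generator on $[2/3,3/4)$. The main obstacle I anticipate is Stage 2's bookkeeping in the $\{-1/\alpha\}\in(0,1/3)$ sub-case, where three exceptional generators coexist and the standard-monomial count must be verified in several overlapping degree regimes, together with the Stage 3 cancellation analysis, which requires writing down explicit S-polynomial combinations whose top terms vanish.
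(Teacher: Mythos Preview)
Your overall plan is close to the paper's, and in fact Stage~2 is the same argument in different clothing: the ``direct bijection between standard monomials of degree $d$ and lattice points $(d,c)\in M$'' that you propose is precisely what the paper establishes by computing, for every $(d,c)\in M$, its \emph{minimal decomposition} $\MD(d,c)$ with respect to the term order (Table~\ref{tab:min_decomps}) and then showing that the listed leading terms are exactly the monomials that are \emph{not} minimal decompositions of their image in $M$ (Lemma~\ref{lem:1point_rels_M}). The paper packages this as a Gr\"obner basis for the monoid $M$ and lifts to $S_D$ by a short pole-order argument; your Hilbert-series count is an equivalent formulation. Your Stage~1 is also over-engineered: you never need the Weierstrass equation or the group law. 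If two monomials in the $f_i$ map to the same $(d,c)\in M$, their images in $S_D$ have equal degree and equal pole order at $\infty$, so a suitable scalar combination drops the pole order; iterating gives the relation with the prescribed leading term.

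Stage~3, however, contains a genuine gap. The implication ``Gr\"obner-minimal $\Rightarrow$ minimal ideal generator'' is false in general, and your divisibility check does not actually isolate the boxed cases: none of the listed leading terms divides another, boxed or not (in the $[1/3,1/2)$ row $f_0^2f_2^2$ is not a multiple of $f_0f_\id$, $f_\ib f_\id$, or $f_\id^2$; in the $[2/3,1)$ row $f_0f_3$ has been explicitly omitted, so nothing divides $f_0f_3^2$). The correct reason the non-boxed relations are minimal is different: each has a \emph{quadratic} leading term $f_if_j$, and since the $f_i$ form a minimal generating set, the relation ideal $I$ contains no element with a linear term, so no quadratic relation can lie in $(f_i:i\in I)\cdot I$. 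This argument breaks down exactly for the two higher-degree boxed terms, where one must check directly whether the relation lies in the ideal generated by the others. The paper does this via monoid identities: $2v_0+2v_2 = v_0+v_\ib+v_\id = 3v_\ib$ shows the $f_0^2f_2^2$ relation is always generated by the $f_0f_\id$ and $f_\ib f_\id$ relations, while $v_0+2v_3 = v_0+v_2+v_4 = 3v_2$ shows the $f_0f_3^2$ relation is redundant precisely when $v_4=2v_3-v_2\in M$, i.e.\ when $\{-1/\alpha\}\ge 3/4$.
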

    
    The relations for $S_D$ are closely connected with those of the associated monoid $M$. Recall from the previous subsection that $M$ has a minimal generating set $\{v_i\}_{i \in I}$. Let $F$ be the free commutative monoid on $\size{I}$ generators $\{\tilde v_i\}_{i \in I}$ (isomorphic to $\bbZ_{\geq 0}^{\size{I}}$), and let $\pi$ be the projection map
    \begin{align*}
        \pi : F &\to M \\
        \tilde v_i &\mapsto v_i.
    \end{align*}
    To complete a presentation of $M$ is to find a (preferably finite) list of relations
    \[
    R_j \colon e_j \sim e_j'
    \]
    such that $M = F/\{R_j\}_j$, that is, such that the relation generated by the $R_j$ is precisely
    \[
    e \sim e' \iff \pi(e) = \pi(e').
    \]
    To do this systematically, we make the following definition:
    \begin{definition}
        If $v \in M$, the \textbf{minimal decomposition} of $v$ is the sum
        \[
        \MD(v) = \sum_i a_i \tilde v_i \in F
        \]
        such that $\pi(\MD(v)) = v$ and $\MD(v)$ is minimal with respect to the order from Definition \ref{def:term_order_1pt} on elements of $F$.
    \end{definition}
    \begin{remark}
        It is evident from the $d$-grading on $M$ that only finitely many such decompositions exist. Also, since $v$ is fixed, the first two steps of Definition \ref{def:term_order_1pt} may be skipped when comparing decompositions.
    \end{remark}
    
    \begin{definition}
        \label{def:Grobner monoid}
        The \textbf{Gr\"obner basis} of $M$ consists of the relations
        \[
        v \sim \MD(\pi(v))
        \]
        for all vectors $v$ satisfying the following two conditions:
        \begin{enumerate}
            \item\label{it:vv} $v \neq \MD(\pi(v))$;
            \item\label{it:ww} If $w <_F v$ (that is, $v = w + z$ with $0 \neq z \in F$), then $w = \MD(\pi(w))$.
        \end{enumerate}
        Such a vector $v \in F$ is called a \textbf{relation leader} for $M$.
    \end{definition}
    
    \begin{remark}
        It is easy to see that the Gr\"obner basis forms a presentation of $M$ as a quotient of $F$. Indeed, the relation leaders give the leading terms of a Gr\"obner basis of the monoid algebra $\Bbbk[M]$ as a quotient of the free algebra $\Bbbk[F]$ (adapting Definition \ref{def:term_order_1pt} appropriately to define a term order on $\Bbbk[F]$).
    \end{remark}
    
    The proof of Theorem \ref{thm:1point_rels} rests on the following combinatorial lemma:
    \begin{lemma} \label{lem:1point_rels_M}
        % Let $M$ be the monoid corresponding to a $1$-point divisor $D = \alpha (\infty)$.
        The relation leaders for $M$ are exactly the vectors $v = \sum_{i \in I} a_i \tilde v_i$ corresponding to each of the monomials $\prod_{i \in I} f_i$ claimed to be a leading term of a relation for $S_D$ in Theorem \ref{thm:2point_gens}.
    \end{lemma}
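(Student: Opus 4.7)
The plan is to verify Lemma \ref{lem:1point_rels_M} by a double inclusion: every monomial listed as a leading term in Theorem \ref{thm:1point_rels} gives a relation leader of $M$, and no others do. The main geometric tools are the strict monotonicity of the slopes $c_i/d_i$ of the best lower approximations, the explicit coordinates of the exceptional generators $v_\ib, v_\ic, v_\id$ obtained in Lemma \ref{lem:irred_M}, and the term order of Definition \ref{def:term_order_1pt}, which prefers decompositions of smaller degree, then smaller pole order, then using generators occurring later in the fixed ordering.

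For the ``$\supseteq$'' direction, I fix a candidate $e = \sum_i a_i \tilde v_i$ and check conditions \ref{it:vv} and \ref{it:ww}. The basic case is a two-generator product $e = \tilde v_i + \tilde v_j$ with $3 \leq i \leq r$ and $0 \leq j \leq i-2$, $j \neq 1$: strict convexity guarantees that $v_i + v_j$ lies strictly above the line segment joining some pair of consecutive best-approximation generators $v_h, v_{h+1}$ with $j \leq h < i$, so $\pi(e)$ admits an alternative decomposition $a \tilde v_h + b \tilde v_{h+1}$ with $a, b \geq 1$ that is strictly smaller in the term order, yielding \ref{it:vv}; this is exactly the genus-zero argument from \cite[Theorem 4]{O'Dorney}. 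Products of the form $f_i f_\ib$, $f_i f_\ic$, $f_i f_\id$ with $i \geq 3$ are handled analogously, treating the exceptional generator as an ``interior'' lattice point on a line between two best-approximation generators. For the exceptional monomials in the table of item (3), such as $\tilde v_\ic^2$ or $2\tilde v_0 + 2\tilde v_2$, one obtains \ref{it:vv} by a direct coordinate computation using the explicit formulas from Lemma \ref{lem:irred_M}. Condition \ref{it:ww} is immediate for two-generator leaders, since the only proper sub-sums are single generators, and for the higher-degree leaders it follows because every proper sub-sum is strictly smaller than any already-listed leader and therefore minimal by an inductive argument.

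For the ``$\subseteq$'' direction, I would proceed by enumeration within each of the five regimes of $\{-1/\alpha\}$ from the table. A relation leader $e$ is \emph{tight} in the sense that every proper sub-sum is its own minimal decomposition, so if $e$ properly contains as a sub-sum any combination that already yields a listed leader in items (1) or (2), then $e$ must equal that sub-sum. This reduces the search to small-degree monomials whose two-generator sub-sums are all minimal, and within each regime one enumerates lattice points $v \in M$ of small degree that admit two distinct decompositions into the available generators, confirming that the minimal such coincidences are exactly the ones in the statement. The main obstacle will be the boundary case analysis at $\{-1/\alpha\} \in \{0, 1/3, 1/2, 2/3\}$, where the set of exceptional generators and the induced term order both shift, and where the two boxed higher-degree leaders appear. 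In particular, verifying that $f_0^2 f_2^2$ is a genuine leader in $[1/3, 1/2)$ hinges on the fact that $f_\ic$ is absent in this regime, so $\tilde v_0 + \tilde v_2$ is itself minimal and cannot be used to shorten a proper sub-sum; the transition at $\{-1/\alpha\} = 2/3$, where $f_0 f_3$ is absorbed into the higher-degree leader $f_0 f_3^2$, requires the analogous check that the coincidence $\pi(\tilde v_0 + \tilde v_3) = \pi(\cdots)$ first appears precisely in that subinterval. I would handle the secondary minimality claim (the remark about the boxed relations) by a short explicit $S$-polynomial computation in each case.
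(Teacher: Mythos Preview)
Your outline follows the same two–step logic as the paper, but the core content of the proof---an explicit determination of $\MD(v)$ for every $v\in\angle v_0v_2$, split into the five regimes of $\{-1/\alpha\}$---is absent, and without it both directions have real gaps.

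In the ``$\supseteq$'' direction, your treatment of the basic case $\tilde v_i+\tilde v_j$ with $j\le i-2$ invokes the genus-zero decomposition $v_i+v_j=a\,v_h+b\,v_{h+1}$ into consecutive best-approximation vectors. That decomposition may use $v_1$, which is \emph{not} a generator of $M$. Whenever $v_i+v_j$ lands in $\angle v_0v_2$---and it does, for instance, for $j=0$, $i=3$ in every regime with $c_2=2$---the genus-zero witness is unavailable and you must produce a smaller decomposition out of $v_0,v_\ib,v_\ic,v_2,v_\id$. This is exactly what the paper supplies by tabulating all minimal decompositions in $\angle v_0v_2$. The same problem afflicts your handling of $f_if_\ib$, $f_if_\ic$, $f_if_\id$: those sums always lie in $\angle v_0v_2$, so no convexity shortcut applies. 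Note too that this gap is precisely where the exception lives: in the regime $\{-1/\alpha\}\in[2/3,1)$ the only competing decomposition of $v_0+v_3=v_1+v_2$ uses the forbidden $v_1$, so $\tilde v_0+\tilde v_3$ \emph{is} its own $\MD$ and fails condition~\ref{it:vv}; your uniform argument would not detect this.

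In the ``$\subseteq$'' direction you write ``one enumerates'', but the enumeration \emph{is} the proof. The paper first shows that for $v\in\angle v_hv_{h+1}$ with $h\ge2$ the $\MD$ is the obvious two-term sum, then tabulates $\MD(v)$ throughout $\angle v_0v_2$; a monomial is then a relation leader iff it is a divisibility-minimal non-$\MD$, which is read off directly. Your inductive justification of condition~\ref{it:ww} for the cubic and quartic leaders is circular as stated: knowing that a proper sub-sum contains no \emph{listed} leader does not give $w=\MD(\pi(w))$ until the list is known to be complete. Finally, the $S$-polynomial remark about the boxed relations belongs to the proof of Theorem~\ref{thm:1point_rels}, not to this lemma, which concerns only the monoid $M$.
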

    
    Given this lemma, the proof of the theorem is not so hard:
    \begin{proof}[Proof of Theorem \ref{thm:1point_rels}]
        Given a relation leader $v \in F$, let 
        \[
        v = \sum_i a_i \tilde v_i \textand
        \MD(\pi(v)) = \sum_i b_i \tilde v_i.
        \]
        We get that $\prod_i f_i^{a_i}$ and $\prod_i f_i^{b_i}$ have the same degree and pole order, so their difference (after suitably scaling) has a pole of lower order and can be written in terms of the other generators to get a relation $r_v$ in the relation ideal of $S_D$. Since we compare monomials with reference to their pole orders, the leading term of $r_v$ is $\prod_i f_i^{a_i}$.
        
        Now, given any element $f \in S_D$ expressed as a polynomial in the generators $f_i$, we can apply monomial multiples of the relations $r_v$ to remove any relation leaders from the leading term, until either the entire sum vanishes (verifying that $f = 0$ in $S_D$) or the leading term is a minimal decomposition $\MD((d,c))$, verifying that $f$ is a nonzero element of leading degree $d$ with a pole of order $c$. This shows that the $r_v$ form a Gr\"obner basis.
        
        It remains to determine which of the relations in the Gr\"obner basis are minimal. Note that if $v = \tilde v_i + \tilde v_j$ has Hamming weight $2$, then $r_v$, which has a quadratic leading term $f_i f_j$, must be minimal because there are no relations with a term of $f_i$ or $f_j$ alone to generate this relation (or else $f_i$, respectively $f_j$, would not be a generator).
        
        This leaves the two boxed relations. The relation with quartic leading term $f_0^2 f_2^2$, in case $\{-1/\alpha\} \in [1/3,1/2)$, has the form
        \[
        f_0^2 f_2^2 = f_\ib^3 + \text{lower-order poles,}
        \]
        but since
        \[
        2v_0 + 2v_2 = v_0 + v_\ib + v_\id = 3v_\ib,
        \]
        this relation can be derived by subtracting those led by $f_\ib f_\id$ and $f_0 f_\id$ after multiplying by $f_0$ and $f_\ib$ respectively to cancel the $f_0f_\ib f_\id$ term. Hence this relation is never minimal.
        
        We now turn to the relation with cubic leading term $f_0 f_3^2$. The corresponding relation in $M$ is
        \begin{equation} \label{eq:cubic_M}
            v_0 + 2v_3 = 3v_2
        \end{equation}
        If $\{-1/\alpha\} \in [3/4, 1)$, then there is a vector $v_4 = 2v_3 - v_2 \in M$, and the relation \eqref{eq:cubic_M} is not minimal, even in $M$:
        \[
        v_0 + 2v_3 = v_0 + v_2 + v_4 = 3v_2.
        \]
        Translating from $M$ to $S_D$, we find correspondingly that the relation with leading term $f_0 f_3^2$ can be generated by the relations led by $f_2 f_4$ and $f_0 f_4$.
        
        On the other hand, if $\{-1/\alpha\} \in [2/3, 3/4)$, then $2v_3 - v_2 \notin M$, and there are no relations having a term of $f_3^2$ or $f_0 f_3$, hence no way to decompose the relation with leading term $f_0 f_3^2$.
    \end{proof}
    
    \begin{proof}[Proof of Lemma \ref{lem:1point_rels_M}]
        % The computation of the relations follows a natural strategy. We write each lattice point $(d,c) \in M$ as a sum of irreducible vectors $\sum_i a_i v_i$, which translates to a monomial $\prod_i f_i^{a_i}$. If the decomposition is not unique, we get a relation wherein the difference of the two monomials is expressed as a sum of functions of degree $d$ with poles of order strictly less than $c$. We can get a $\Bbbk$-basis for the relation ideal in this way, which we wish to trim down to a Gr\"obner basis by eliminating relations whose leading term is a multiple of the leading term of another relation.
        %
        Let $v = (d,c) \in M$. If $v \in \angle v_i v_{i+1}$ for $i \geq 2$, then we have a decomposition $v = a_i v_i + a_{i+1} v_{i+1}$ with $a_j \geq 0$. We claim that this is a minimal decomposition. Suppose not; let $v = \sum_j b_j v_j$ be a lesser one. We must have $b_j = 0$ for $j \leq i+1$. If $b_{i+1} < a_{i+1}$, then the difference $v - b_{i+1} v_{i+1}$ would be outside $\angle v_0 v_i$ and thus not expressible as a sum of generators preceding $v_{i+1}$, all of which are within that angle. So $b_{i+1} = a_{i+1}$. Similarly, if $b_i < a_{i+1}$, then the difference $v - b_{i+1} v_{i+1} - b_i v_i$ would be outside $\angle v_0 v_i$ and thus not expressible as a sum of generators preceding $v_{i+1}$, all of which are within that angle. So $b_i = v_i$ and hence all other $b_i$ are $0$.
        
        We now compute the minimal decompositions of vectors in $\angle v_0 v_2$. The generators needed are some subset of
        \[
        v_0, v_\ib, v_\ic, v_2, v_\id, v_3
        \]
        in the various cases. The resulting minimal decompositions are shown in Table \ref{tab:min_decomps}. Each individual decomposition is not hard to prove minimal. The pattern of the decompositions is as follows: the generating sets $\{v_0, v_\ib\}$, $\{v_\ib, 2\}$, or (if $\{-1/\alpha\} \geq 1/2$) $\{v_0, v_2\}$ generates a sublattice of index $2$ inside the appropriate angle. If the desired vector $v$ lies in this sublattice as tested by the parity of $a_1$, its minimal decomposition is an integer combination of those two generators; otherwise there is need for a single copy of $v_\ic$, $v_\id$, or $v_3$.

        \begin{table}[htbp]
            \centering
            \begin{tabular}{r|l|l}
                $\left\{-\dfrac{1}{\alpha}\right\} \in$ & Generators & Minimal decompositions
                \\
                \hline
                
                $\{0\}$ &
                \begin{tabular}[t]{@{}l}
                    $v_0$ \\
                    $v_\ib = 2v_1$ \\
                    $v_\ic = 3v_1$ \\
                \end{tabular} &
                \begin{tabular}[t]{@{}l}
                    $a_1 v_1 + a_0 v_0 = \begin{dcases*}
                        \frac{\vphantom{A^2}a_1}{2} v_\ib + a_0 v_0, & $a_1 \geq 0$ even \\
                        v_\ic + \frac{a_1 - 3}{2} v_\ib + a_0 v_0, & $a_1 \geq 3$ odd
                    \end{dcases*}$
                \end{tabular} \\
                
                $(0,1/3)$ &
                \begin{tabular}[t]{@{}l}
                    $v_0$ \\
                    $v_\ib = 2 v_1$ \\
                    $v_\ic = 3v_1$ \\
                    $v_2$ \\
                    $v_\id = v_1 + v_2$
                \end{tabular} &
                \begin{tabular}[t]{@{}l}
                    $a_1 v_1 + a_0 v_0 = \begin{dcases*}
                        \frac{a_1}{2} v_\ib + a_0 v_0, & $a_1 \geq 0$ even \\
                        v_\ic + \frac{a_1 - 3}{2} v_\ib + a_0 v_0, & $a_1 \geq 3$ odd
                    \end{dcases*}$ \\
                    $a_2 v_2 + a_1 v_1 = \begin{dcases*}
                        a_2 v_2 + \frac{a_1}{2} v_\ib, & $a_1 \geq 0$ even \\
                        v_\id + (a_2 - 1)v_2, & $a_1 = 1$ \\
                        a_2 v_2 + v_\ic + \frac{a_1 - 3}{2} v_\ib, & $a_1 \geq 3$ odd
                    \end{dcases*}$
                \end{tabular} \\
                
                $[1/3,1/2)$ &
                \begin{tabular}[t]{@{}l}
                    $v_0$ \\
                    $v_\ib = 2 v_1$ \\
                    $v_2 = 3v_1 - v_0$ \\
                    $v_\id = v_1 + v_2$
                \end{tabular} &
                \begin{tabular}[t]{@{}l}
                    $a_1 v_1 + a_0 v_0 = \begin{dcases*}
                        \frac{a_1}{2} v_\ib + a_0 v_0, & $a_1 \geq 0$ even \\
                        v_2 + \frac{a_1 - 3}{2} v_\ib + (a_0 + 1) v_0, & $a_1 \geq 3$ odd
                    \end{dcases*}$ \\
                    $a_2 v_2 + a_1 v_1 = \begin{dcases*}
                        a_2 v_2 + \frac{a_1}{2} v_\ib, & $a_1 \geq 0$ even \\
                        v_\id + (a_2 - 1)v_2, & $a_1 = 1$ \\
                        (a_2 + 1) v_2 + \frac{a_1 - 3}{2} v_\ib + v_0, & $a_1 \geq 3$ odd
                    \end{dcases*}$
                \end{tabular} \\
                
                $[1/2,2/3)$ &
                \begin{tabular}[t]{@{}l}
                    $v_0$ \\
                    $v_2 = 2v_1 - v_0$ \\
                    $v_\ic = 3v_1 - v_0$
                \end{tabular} &
                \begin{tabular}[t]{@{}l}
                    $a_1 v_1 + a_0 v_0 = \begin{dcases*}
                        \frac{a_1}{2} v_2 + \left(\frac{a_1}{2} + a_0\right) v_0, & $a_1 \geq 0$ even \\
                        v_\ic + \frac{a_1 - 3}{2} v_2 + \left(\frac{a_1 - 3}{2} + a_0 \right) v_0, & $a_1 \geq 3$ odd
                    \end{dcases*}$ \\
                    $a_2 v_2 + a_1 v_1 = \begin{dcases*}
                        \left(a_2 + \frac{a_1}{2}\right) v_2 + \frac{a_1}{2} v_0, & $a_1 \geq 0$ even \\
                        v_\ic + \left(a_2 + \frac{a_1 - 1}{2}\right)v_2 + \frac{a_1 - 1}{2} v_0, & $a_1 \geq 1$ odd
                    \end{dcases*}$
                \end{tabular} \\
                
                $[2/3,1)$ &
                \begin{tabular}[t]{@{}l}
                    $v_0$ \\
                    $v_2 = 2v_1 - v_0$ \\
                    $v_3 = 3v_1 - 2v_0$
                \end{tabular} &
                \begin{tabular}[t]{@{}l}
                    $a_1 v_1 + a_0 v_0 = \begin{dcases*}
                        \frac{a_1}{2} v_2 + \left(\frac{a_1}{2} + a_0\right) v_0, & $a_1 \geq 0$ even \\
                        v_3 + \frac{a_1 - 3}{2} v_2 + \left(\frac{a_1 - 3}{2} + a_0\right)v_0, & $a_1 \geq 3$ odd
                    \end{dcases*}$ \\
                    $a_2 v_2 + a_1 v_1 = \begin{dcases*}
                        \left(a_2 + \frac{a_1}{2}\right) v_2 + \frac{a_1}{2} v_0, & $a_1 \geq 0$ even \\
                        v_3 + \left(a_2 + \frac{a_1 - 3}{2}\right)v_2 + \frac{a_1 + 1}{2} v_0, & $a_1 \geq 1$ odd
                    \end{dcases*}$
                \end{tabular}
                
            \end{tabular}
            \caption{Minimal decompositions of vectors of $M$ in $\angle v_0v_2$}
            \label{tab:min_decomps}
        \end{table}
        
        Looking over the minimal decompositions that we have found, we immediately see that the sums $v_i + v_j$, where $i \geq 3$ and $0 \leq j \leq i - 2$, do not appear and therefore are relation leaders, with one exception: if $\{-1/\alpha\} \in [2/3,1)$, then $v_0 + v_3$ is the minimal decomposition of $v_1 + v_2$. The same argument applies to the sums $v_i + v_j$ where $i \geq 3$ and $j$ is one of the special symbols \ref{it:ord2}, \ref{it:ord3}, and \ref{it:c1+c2}. It remains only to check sums of $v_0$, $v_\ib$, $v_\ic$, $v_\id$, $v_2$, and (if $\{-1/\alpha\} \in [2/3,1)$) $v_3$, and in each case we get the desired result:
        \begin{itemize}
            \item For $\{-1/\alpha\} = 0$, as the only generators are $f_0$, $f_\ib$, and $f_\ic$, there will only be one relation leader, namely $2v_\ic$. Any decomposition with at most one copy of $v_\ic$ is minimal by our computations.
            \item For $\{-1/\alpha\} \in (0,1/3)$, any decomposition not containing the relation leaders $2v_\ic$, $v_\ib + v_\id$, $v_\ic + v_\id$, $2v_\id$, $v_0 + v_\id$, or $v_0 + v_2$ is of one of the forms
            \begin{gather*}
                a_0 v_0 + a_\ib v_\ib, \quad a_0 v_0 + a_\ib v_\ib + v_\ic, \\
                a_\ib v_\ib + a_2 v_2, \quad a_\ib v_\ib + a_2 v_2 + v_\ic, \quad a_2 v_2 + v_\id
            \end{gather*}
            and hence is minimal.
            \item For $\{-1/\alpha\} \in [1/3,1/2)$, any decomposition not containing the relation leaders $2v_\ic$, $v_\ib + v_\id$, $v_\ic + v_\id$, $2v_\id$, $v_0 + v_\id$, or  $2v_0 + 2v_2$ is of one of the forms
            \begin{gather*}
                a_0 v_0 + a_\ib v_\ib, \quad a_0 v_0 + a_\ib v_\ib + v_2, \\
                a_\ib v_\ib + a_2 v_2, \quad a_\ib v_\ib + a_2 v_2 + v_0, \quad a_2 v_2 + v_\id
            \end{gather*}
            and hence is minimal.
            \item For $\{-1/\alpha\} \in [1/2,2/3)$, any decomposition not containing the relation leader $2v_\ic$ is of one of the forms
            \begin{gather*}
                a_0 v_0 + a_2 v_2, \quad a_0 v_0 + a_2 v_2 + v_\ic
            \end{gather*}
            and hence is minimal.
            \item For $\{-1/\alpha\} \in [2/3,1)$, any decomposition not containing the relation leader $v_0 + 2v_3$ is of one of the forms
            \begin{gather*}
                a_0 v_0 + a_2 v_2, \quad a_0 v_0 + a_2 v_2 + v_3, \quad a_2 v_2 + a_3 v_3
            \end{gather*}
            and hence is minimal.
        \end{itemize}
        This completes the proof of the lemma.
    \end{proof}
    
    \section{The effective two-point case}
    \label{section: 2pt effective}
    
    Let $D = \alpha^{(1)}(P^{(1)}) + \alpha^{(2)}(P^{(2)})$ be an effective $\bbQ$-divisor on an elliptic curve $C$ supported on two points $P^{(k)}$. In this section, we study the structure of the associated section ring $S_D$. We may assume that $\alpha^{(1)} \geq \alpha^{(2)}$ since the roles of the points $P^{(k)}$ may be interchanged.
    
    \subsection{Generators}
    
    For $c \in \bbZ_{\geq 0}$, $c \neq 1$, and for $i \in \{1,2\}$, denote by $t_c^{(k)}$ a function on $C$ whose polar divisor is $c(P^{(i)})$. Also, let $w$ be the function on $C$ whose polar divisor is $P^{(1)} + P^{(2)}$. (Such a function is unique up to scaling and adding constants.) Note the following:
    
    \begin{lemma} \label{lem:H0_2pt}
        Let $D = a^{(1)}(P^{(1)}) + a^{(2)}(P^{(2)})$ be a nonzero effective $\bbZ$-divisor supported at two points. The linear system of functions $H^0(D)$ has dimension $a^{(1)} + a^{(2)}$, with a basis as follows:
        \begin{itemize}
            \item $\{1, t_2^{(1)}, \ldots, t_{a^{(1)}}^{(1)}\}$ if $a^{(1)} > a^{(2)} = 0$;
            \item $\{1, t_2^{(2)}, \ldots, t_{a^{(2)}}^{(2)}\}$ if $a^{(2)} > a^{(1)} = 0$;
            \item $\{1, w, t_2^{(1)}, \ldots, t_{a^{(1)}}^{(1)}, t_2^{(2)}, \ldots, t_{a^{(2)}}^{(2)}\}$ if $a^{(1)}$ and $a^{(2)}$ are positive.
        \end{itemize}
    \end{lemma}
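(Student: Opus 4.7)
The plan is to verify that each claimed basis element lies in $H^0(D)$, check linear independence via local pole orders, and match the count against the Riemann--Roch dimension of $H^0(D)$.

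First I would compute the dimension. Since $C$ is an elliptic curve, the canonical divisor $K_C$ is trivial, and $\deg(K_C - D) = -(a^{(1)} + a^{(2)}) < 0$, so $h^1(D) = h^0(K_C - D) = 0$ by Serre duality. Riemann--Roch then gives $h^0(D) = \deg D = a^{(1)} + a^{(2)}$. So it suffices to exhibit that many linearly independent functions in $H^0(D)$.

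Next I verify that each listed function lies in $H^0(D)$. The constant $1$ is trivial. For $2 \leq c \leq a^{(k)}$, the function $t_c^{(k)}$ was defined to have pole divisor exactly $cP^{(k)}$, and $cP^{(k)} \leq a^{(k)}P^{(k)} \leq D$, so $t_c^{(k)} \in H^0(D)$. When both $a^{(k)} \geq 1$, the function $w$ has pole divisor $P^{(1)} + P^{(2)} \leq D$ and so lies in $H^0(D)$; its existence follows from Riemann--Roch applied to $P^{(1)} + P^{(2)}$, which gives $h^0 = 2$, combined with the classical fact that no function on an elliptic curve has a single simple pole, forcing any nonconstant section to have poles at both points.

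For linear independence I look at the order at $P^{(1)}$. In the third case, the orders of $1$, $w$, $t_c^{(1)}$ ($2 \leq c \leq a^{(1)}$), and $t_c^{(2)}$ ($2 \leq c \leq a^{(2)}$) at $P^{(1)}$ are respectively $0$, $-1$, $-c$, and $c-1$; these are pairwise distinct integers in $[-a^{(1)},\, a^{(2)}-1]$. A nontrivial linear relation would then have a well-defined minimum order at $P^{(1)}$, and the leading coefficient at that order would have to vanish, a contradiction. The two one-sided cases are identical, with orders $0, -2, -3, \ldots, -a^{(1)}$ (resp.\ $0, -2, -3, \ldots, -a^{(2)}$ at $P^{(2)}$). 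Since the number of linearly independent functions exhibited equals the Riemann--Roch dimension $a^{(1)} + a^{(2)}$, they form a basis.

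The only step requiring any thought is the existence of $w$; otherwise this is straightforward bookkeeping of divisors.
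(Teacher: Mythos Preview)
Your argument is correct and follows the same approach as the paper: compute $\dim H^0(D)$ by Riemann--Roch, then verify linear independence of the listed functions by noting they have pairwise distinct valuations at $P^{(1)}$. The paper's proof is a two-sentence sketch; you have simply filled in the details (membership in $H^0(D)$, the explicit list of orders, and the existence of $w$), all of which are routine and already implicit in the surrounding text.
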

    \begin{proof}
        The dimension of $H^0(D)$ is given by the Riemann--Roch theorem. We check that the claimed functions are linearly independent because they have different orders at $\infty$, so they must form a basis.
    \end{proof}
    
    We now state the generators of the section ring. Our description generalizes the one-point case (Theorem \ref{thm:1point_gens}):
    \begin{theorem}\label{thm:2point_gens}   
        Let $D = \alpha^{(1)}(P^{(1)}) + \alpha^{(2)}(P^{(2)})$ be an effective $\bbQ$-divisor on an elliptic curve $C$ supported on two points $P^{(k)}$, with $\alpha^{(1)} \geq \alpha^{(2)}$. Let 
        \[
        0 = \frac{c_0^{(k)}}{d_0^{(k)}} < \frac{c_1^{(k)}}{d_1^{(k)}} < \cdots < \frac{c_{r^{(k)}}^{(k)}}{d_{r^{(k)}}^{(k)}} = \alpha^{(k)}
        \]
        be the best lower approximations to $\alpha^{(k)}$. Notice that the denominators
        \[
        d_1^{(k)} = \Ceil{1/\alpha^{(k)}}
        \]
        satisfy $d_1^{(1)} \leq d_1^{(2)}$.
        
        Then $S_D$ has a minimal system of generators of the following forms:
        \begin{enumerate}[$($a$)$]
            \item $f_i^{(k)} = t_{c_i^{(k)}}^{(k)} u^{d_i^{(k)}}$ for $k \in \{1, 2\}$ and $i = 0, 2, 3, \ldots, r^{(k)}$ (including $f_0 = u$ for $i = 0$)
            \item $f_\ib = t_{2}^{(1)} u^{\ceil{2/\alpha^{(1)}}}$ if $\{-1/\alpha^{(1)}\} \in [0,1/2)$;
            \item $f_\ic = t_{3}^{(1)} u^{\ceil{3/\alpha^{(1)}}}$ if $\{-1/\alpha^{(1)}\} \in [0,1/3) \union [1/2, 2/3)$ and $\ceil{1/\alpha^{(2)}} > \ceil{1/\alpha^{(1)}}$;
            \item $f_\id = t_{c_1^{(1)} + c_2^{(2)}}^{(1)} u^{d_1^{(1)} + d_2^{(1)}}$, if $\{-1/\alpha^{(1)}\} \in (0,1/2)$ and $\ceil{1/\alpha^{(2)}} > \ceil{1/\alpha^{(1)}}$;
            \item $f_w = w u^{d_1^{(2)}}$.
        \end{enumerate}
    \end{theorem}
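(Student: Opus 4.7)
The plan is to adapt the proof of Theorem \ref{thm:1point_gens} to the two-point setting. Using Lemma \ref{lem:H0_2pt}, I would index a $\Bbbk$-basis of $S_D$ by a degree $d$ together with a ``pole-divisor type'': constants have type $(0,0)$, the function $t_c^{(k)}$ has type $(c,0)$ if $k=1$ and $(0,c)$ if $k=2$, and $w$ has type $(1,1)$. Unlike the one-point case, products of basis vectors need not have pole-divisor type equal to the sum of those of the factors, since zeros of one factor may partially cancel poles of another at one of the $P^{(k)}$; the combinatorial analysis must therefore be coupled with explicit $\Bbbk$-linear decompositions in each graded piece.

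For necessity, the type (a) generators $f_i^{(k)}$ are irreducible by applying Lemma \ref{lem:irred_M} to the sub-section-ring of $S_D$ consisting of functions whose poles are supported only at $P^{(k)}$---this is precisely the one-point section ring $S_{\alpha^{(k)}(P^{(k)})}$ sitting inside $S_D$. The exceptional $P^{(1)}$-generators $f_\ib$, $f_\ic$, $f_\id$ are needed by the same combinatorial reasoning as in the one-point case, provided their degree cannot already be reached by a product involving $f_w$. For $f_\ic$ and $f_\id$, this reduces to the condition $\ceil{1/\alpha^{(2)}} > \ceil{1/\alpha^{(1)}}$: for instance, the product $f_w \cdot f_\ib$ lies in degree $d_1^{(2)} + \ceil{2/\alpha^{(1)}}$, which matches $\deg f_\ic = \ceil{3/\alpha^{(1)}}$ precisely when $d_1^{(2)} = d_1^{(1)}$. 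Finally, $f_w$ itself is irreducible because $w$ is uniquely characterized (up to scaling and constants) by its pole divisor $(1,1)$, and no product of other generators can share this pole divisor: any such product involves either a $t_c^{(k)}$ factor with $c \geq 2$ (creating a pole of order $\geq 2$ at $P^{(k)}$) or only $f_0$ factors (creating no poles at all).

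For sufficiency, I would induct on $d$ and show each basis vector of $H^0(dD)$ lies in the subring $S'$ generated by our list. Constants and $w$-type basis elements are handled by $f_0^d$ and $f_w f_0^{d - d_1^{(2)}}$ respectively, and each $t_c^{(k)} u^d$ with $c$ a best approximation to $\alpha^{(k)}$ by the appropriate type (a) generator times a power of $f_0$. For the remaining basis elements $t_c^{(k)} u^d$ with $c$ not a best approximation, I would use the exceptional generators (when present) or $f_w$ to bridge the gap. The key identity is that $f_w^2$ decomposes as a $\Bbbk$-linear combination of $f_0^{2 d_1^{(2)}}$, $f_w f_0^{d_1^{(2)}}$, $t_2^{(1)} u^{2 d_1^{(2)}}$, and $t_2^{(2)} u^{2 d_1^{(2)}}$, with nonzero coefficients on the last two terms (since $w^2$ has genuine double poles at each $P^{(k)}$); given one of the $t_2^{(k)}$ elements, we can solve for the other. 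Higher pole orders follow inductively from products $f_w \cdot t_{c-1}^{(k)}$, which have pole of order $c$ at $P^{(k)}$ after partial cancellation at $P^{(3-k)}$.

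The main obstacle will be verifying the case $d_1^{(2)} = d_1^{(1)}$, where the theorem instructs us to omit $f_\ic$ and $f_\id$ and rely on $f_w$ (combined with type (a) generators) to produce $t_3^{(1)} u^d$ and $t_{c_1^{(1)} + c_2^{(1)}}^{(1)} u^d$ at the correct degrees. This reduces to a collection of explicit coefficient nonvanishings in Laurent expansions at $P^{(1)}$ of monomials in $w$ and the $t_c^{(k)}$, which follow from the group-law-imposed normalization used to define the $t_c^{(k)}$, together with a case analysis on $\{-1/\alpha^{(1)}\}$ paralleling the one-point theorem.
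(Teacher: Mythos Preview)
Your overall strategy matches the paper's: aggregate the one-point generators for each $P^{(k)}$, adjoin $f_w$, and then analyze which exceptional generators survive. The sufficiency sketch via $f_w$-products (in particular the $f_w^2$ identity and the inductive use of $f_w \cdot t_{c-1}^{(k)}$) is sound and mirrors the paper's reasoning for eliminating the $P^{(2)}$-side exceptional generators and, in the equal-ceilings case, $f_\ic$ and $f_\id$.

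There are, however, two genuine gaps in the necessity direction. First, your argument that $f_w$ is irreducible is incorrect as stated: a $t_c^{(k)}$ factor does \emph{not} force a pole of order $\geq 2$ at $P^{(k)}$ in the product, because another factor $t_{c'}^{(3-k)}$ has a \emph{zero} at $P^{(k)}$ and can cancel it. For instance $t_2^{(1)} t_2^{(2)}$ has a simple pole at each $P^{(k)}$, exactly like $w$. The correct argument is by degree: every generator other than $f_w$ available in degree $<d_1^{(2)}$ is regular at $P^{(2)}$, so no product of them can have a pole there. Second, and more seriously, you do not establish that $f_\ib$ remains minimal in the equal-ceilings case $d_1^{(1)}=d_1^{(2)}$. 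Here $f_w^2$ lands in exactly the degree $2d_1^{(1)}=\lceil 2/\alpha^{(1)}\rceil$ of $f_\ib$ and has a double pole at $P^{(1)}$, so your criterion ``provided the degree cannot be reached by a product involving $f_w$'' fails. The paper's point is that $f_w^2$ simultaneously has a double pole at $P^{(2)}$, and since the would-be type-\ref{it:ord2} generator on the $P^{(2)}$ side has already been discarded, there is nothing left in that degree to cancel it; hence $f_w^2$ cannot be massaged into an element of $S_{D^{(1)}}$, and $f_\ib$ is still needed. You will need this asymmetric bookkeeping argument to close the case.
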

    
    \begin{proof}
        Lemma \ref{lem:H0_2pt} suggests the following strategy. Write $D = D^{(1)} + D^{(2)}$ where
        \[
        D^{(k)} = \alpha^{(k)} P^{(k)}.
        \]
        Then, as a $\Bbbk$-vector space,
        \begin{equation} \label{eq:S_D decompose}
            S_D = S_{D^{(1)}} + S_{D^{(2)}} + \Bbbk[u] u^{d_1^{(2)}} w,
        \end{equation}
        because $w$ is the only basis element for any graded piece $u^n H^0(nD)$ of $S_D$ that does not already appear in either $S_{D^{(1)}}$ or $S_{D^{(2)}}$, and it first appears in degree $\ceil{1/\alpha^{(2)}} = d_1^{(2)}$. Hence we can get a generating set for $S_D$ by aggregating generating sets for $S_{D^{(1)}}$ and $S_{D^{(2)}}$ and adjoining the single added generator $u^{d_1^{(2)}} w$. We must then pare this set down to a minimal generating set. From the general theory of graded algebras, the \emph{degrees} of the minimal generators are uniquely determined; but the generators themselves and their pole orders at the $P^{(k)}$) are not. Observe that the generator $w u^{d_1^{(2)}}$ is minimal, as it is the lowest-degree element in $S_D$ that does not lie in the subring $S_{D^{(1)}} S_{D^{(2)}}$ generated by functions with poles at only one of the two given points $P^{(k)}$.
        
        A generator of type \ref{it:best} in each $S_{D^{(k)}}$ (the labeling coming from Theorem \ref{thm:1point_gens}) will always remain minimal in $S_D$, as there is no way to get a pole of order $c_j^{(k)}$ by combining elements of lower degrees, by definition of best lower approximation. We consider the other types in turn.
        
        We first claim that the generators of types \ref{it:ord2}, \ref{it:ord3}, and \ref{it:c1+c2} in $S_{D^{(2)}}$, if any, are \emph{never} minimal in $S_D$ and can be removed. By the proof of Theorem \ref{thm:1point_gens}, all of these correspond to vectors $(d,c)$ that are minimal generators of the monoid
        \[
        M^{(2)} = \{(d,c) \in \bbZ^2 : 0 \leq c \leq \alpha^{(2)} d, c \neq 1\}
        \]
        but not of the simpler monoid
        \[
        M_0^{(2)} = \{(d,c) \in \bbZ^2 : 0 \leq c \leq \alpha^{(2)} d\}.
        \]
        But $S_D$, unlike $S_{D^{(2)}}$, contains homogeneous elements $f$ achieving every vector
        \[
        (d,c) = (\deg f, -{\ord}_{P^{(2)}} f)
        \]
        in $M_0^{(2)}$: take $w u^d$ if $c = 1$, and otherwise take the appropriate element of $S_{D^{(2)}}$. Consequently, given a generator $g \in S_D$ whose associated vector $(d,c) = (\deg g, -{\ord}_{P^{(2)}} g)$ is reducible in $M_0^{(2)}$, we can multiply elements of $S_D$ of lower degrees to achieve the pole order $c$ and subtract this from $g$ to leave a pole of lower order. This lower order can again be achieved by a product of generators besides $g$ (note that generators of types \ref{it:ord2}, \ref{it:ord3}, and \ref{it:c1+c2} always appear in distinct degrees, referring to Lemma \ref{lem:unique} and Figure \ref{fig:unique}), and continuing this way, we arrive at a function with no pole at $P^{(2)}$, that is, an element of $S_{D^{(1)}}$. Hence $g$ is a polynomial in the other generators of $S_D$.
        
        We now investigate under what conditions the generators of types \ref{it:ord2}, \ref{it:ord3}, and \ref{it:c1+c2} in $S_{D^{(1)}}$ remain minimal in $S_D$. For \ref{it:ord2}, the generator $f_\ib = t_2^{(1)} u^{\ceil{2/\alpha^{(1)}}}$ of degree $d = \ceil{2/\alpha^{(1)}}$ is the first appearance in $S_D$ of a function with a double pole at $P^{(1)}$. By assumption, there is no other generator of $S_{D^{(1)}}$ with a double pole in this degree, so the only way to eliminate it is to multiply two functions of lower degree with simple poles at $P^{(1)}$. The first function of this sort is
        \begin{equation} \label{eq:w^2}
            f_w^2 = w u^{\ceil{1/\alpha^{(2)}}} \cdot w u^{\ceil{1/\alpha^{(2)}}}
        \end{equation}
        of degree
        \[
        2 \Ceil{\frac{1}{\alpha^{(2)}}} \geq 2 \Ceil{\frac{1}{\alpha^{(1)}}}
        \stackrel{*}{=} \Ceil{\frac{2}{\alpha^{(1)}}} = d,
        \]
        the starred equality holding since $\{-1/\alpha^{(1)}\} \in [0,1/2)$.
        So the only way that this generator can be non-minimal is if equality holds, and in particular, $S_{D^{(1)}}$ and $S_{D^{(2)}}$ look alike up to degree $d$. But the unique product \eqref{eq:w^2} in this degree has double poles at both $P^{(1)}$ and $P^{(2)}$, and since we already threw out the generator $t_2^{(2)} u^{d}$ of type \ref{it:ord2} in $S_{D^{(2)}}$, there is no way to cancel out the double pole at $P^{(2)}$. Hence the generator $f_\ib$, if it appears in $S_{D^{(1)}}$, always remains minimal in $S_D$, as claimed.
        
        Next, we look at type \ref{it:ord3}. Here we have a generator $f_\ic = t_3^{(1)} u^{\ceil{3/\alpha^{(1)}}}$ of degree $d = 3/\ceil{3/\alpha^{(1)}}$, which is the least degree in which there appears a triple pole at $\alpha^{(1)}$. To cancel out this pole, we must multiply two elements of lower degree with a single and a double pole at $P^{(1)}$. The first function of this sort is
        \begin{equation} \label{eq:wt_2}
            w u^{\ceil{1/\alpha^{(2)}}} \cdot t_2^{(1)} u^{\ceil{2/\alpha^{(1)}}} = \begin{cases}
                f_w f_\ib, & \{-1/\alpha^{(1)}\} \in [0,1/2) \\
                f_w f_2^{(1)}, & \{-1/\alpha^{(1)}\} \in [1/2,1)
            \end{cases}
        \end{equation}
        of degree
        \begin{equation} \label{eq:numerics c}
            \Ceil{\frac{1}{\alpha^{(2)}}} + \Ceil{\frac{2}{\alpha^{(1)}}} \geq
            \Ceil{\frac{1}{\alpha^{(1)}}} + \Ceil{\frac{2}{\alpha^{(1)}}} \stackrel{*}{=}
            \Ceil{\frac{3}{\alpha^{(1)}}} = d,
        \end{equation}
        the starred equality holding since $\{-1/\alpha^{(1)}\} \in [0,1/3) \union [1/2, 2/3)$. Again, the generator is therefore minimal unless equality holds. If equality holds, then the function \eqref{eq:wt_2} has a triple pole at $P^{(1)}$ as well as a simple pole at $P^{(2)}$ which can be canceled by adding the appropriate multiple of $wu^d$, yielding an element of $S_{D^{(1)}}$ with the desired triple pole. Accordingly, the generator $g$ of type \ref{it:ord3} is minimal if and only if equality does \emph{not} hold in \eqref{eq:numerics c}, as claimed.
        
        Finally, suppose $S_{D^{(1)}}$ has a generator of type \ref{it:c1+c2}, which is of the form $f_\id = t_c^{(1)} u^d$ where
        \[
        d = d_1^{(1)} + d_2^{(1)}, \quad c = c_1^{(1)} + c_2^{(1)} = 1 + c_2^{(1)}
        \]
        Note that this is only the second degree, after $d_2^{(1)}$, in which $S_D$ is strictly larger than the subring $S' = S_{1/d_1^{(1)} P^{(1)} + \alpha^{(2)} P^{(2)}}$ where the pole order at $P^{(1)}$ is limited by the first best lower approximation $1/d_1^{(1)}$. To eliminate $f_\id$, we must multiply two elements of lower degree at least one of which lies outside $S'$. Hence we must use
        \[
        f = t_{c_2^{(1)}} u^{d_2^{(1)}},
        \]
        a generator of $S_{D^{(1)}}$ of type \ref{it:best}. We must multiply it by an element $h$ of degree $d_1^{(1)}$ with at least a simple pole at $P^{(1)}$. But a double pole at $P^{(1)}$ does not appear until degree $\ceil{2/\alpha^{(1)}} \geq 2d_1^{(1)} - 1$, which is too high unless $d_1^{(1)} = 1$ and $\alpha^{(1)} \geq 2$, and here there cannot be a generator of type \ref{it:c1+c2}, because then the generator $t_3^{(1)} u^2$ is of type \ref{it:best} or \ref{it:ord3} rather than \ref{it:c1+c2}. So $h$ must have a simple pole at $P^{(1)}$, which only happens when the generator $h = w u^{d_1^{(2)}}$ has low enough degree, namely when
        \begin{equation}\label{eq:numerics d}
            \Ceil{\frac{1}{\alpha^{(2)}}} = \Ceil{\frac{1}{\alpha^{(1)}}}.
        \end{equation}
        We can then multiply $f \cdot h$ to get a function in degree $d$ with the desired pole order $c$ at $P^{(1)}$ and a simple pole at $P^{(2)}$, which can be canceled by adding the appropriate multiple of $wu^d$. Accordingly, the generator $f_\id$ is minimal if and only if equality does \emph{not} hold in \eqref{eq:numerics d}, as claimed.
    \end{proof}
    
    \subsection{Relations}
    In this section we state and prove the relations among the generators in the effective two-point case. It will be noted that, in Theorem \ref{thm:2point_gens}, the form of the generators is quite different depending on whether \eqref{eq:numerics d} holds or not. This bifurcation in turn affects the term order we choose and the form of the relations, and hence we divide the statement and proof into two.
    \subsubsection{The unequal ceilings case}
    In this section we assume that
    \begin{equation} \label{eq:unequal}
        \Ceil{\frac{1}{\alpha^{(2)}}} < \Ceil{\frac{1}{\alpha^{(1)}}}.
    \end{equation}
    
    \begin{definition} \label{def:term_order_unequal}
        Let $D = \alpha^{(1)} P^{(1)} + \alpha^{(2)} P^{(2)}$ be an effective $\bbQ$-divisor on $C$ supported at two points, and assume \eqref{eq:unequal}. We order the generators of $S_D$
        \begin{enumerate}[(1)]
            \item first by pole order at $P^{(2)}$,
            \item then by degree,
            \item then by pole order at $P^{(1)}$.
        \end{enumerate}
        We order the monomials in the generators of $S_D$
        \begin{enumerate}[(1)]
            \item first by pole order at $P^{(2)}$,
            \item then by degree,
            \item then by pole order at $P^{(1)}$,
            \item then by the exponents of the generators, starting with the highest generator.
        \end{enumerate}
    \end{definition}
    \begin{remark}
        By sorting first by pole order at $P^{(2)}$, we ensure that the generators of the subring $S_{D^{(1)}}$, and monomials therein, appear first in the ordering, and in the same order as in the $1$-point case (Definition \ref{def:term_order_1pt}). For instance, if $\{-1/\alpha^{(1)}\} \in (0, 1/3)$, the ordering of the generators is
        \[
        u = f_0 \prec f_\ib \prec f_\ic \prec f_2^{(1)} \prec f_\id \prec f_3^{(1)} \prec \cdots \prec f_{r^{(1)}}^{(1)} \prec f_w \prec f_2^{(2)} \prec \cdots \prec f_{r^{(2)}}^{(2)}.
        \]
    \end{remark}
    
    \begin{theorem} \label{thm:2point_rels_unequal}
        With this term order, a Gr\"obner basis for the relation ideal of $S_D$ has the following leading terms:
        \begin{enumerate}
            \item The same leading terms of the relations among the $f_i^{(1)}$, $f_\ib$, $f_\ic$, and $f_\id$ that obtain in the one-point case for $S_{D^{(1)}}$ in Theorem \ref{thm:1point_rels};
            \item $u \cdot f_i^{(2)}$, $i \geq 2$;
            \item $f_w \cdot f_i^{(2)}$, $i \geq 3$;
            \item $f_i^{(2)} \cdot f_j^{(2)}$, $i \geq j + 2$;
            \item All products $f^{(1)} f^{(2)}$ where $f^{(1)}$ is of one of the forms $f_i^{(1)}$ ($i \geq 2$), $f_\ib$, $f_\ic$, or $f_\id$ and $f^{(2)}$ is of one of the forms $f_j^{(2)}$ ($j \geq 2$) or $f_w$.
        \end{enumerate}
        % The relations carried over from $S_{D^{(1)}}$ are minimal or not under the same conditions as in Theorem \ref{thm:1point_rels}. The other relations are always minimal.
    \end{theorem}
    
    \begin{remark}
        For the curious reader, this is the case in which we get the maximum of nine exceptional relations claimed in Theorem \ref{thm:2point_intro}: six in $S_{D^{(1)}}$ from the case of Theorem \ref{thm:1point_intro} where $\{-1/\alpha\} \in (0,1/3)$, plus the three added ones with leading terms $f_w f_\ib$, $f_w f_\ic$, and $f_w f_\id$.
    \end{remark}
    
    \begin{proof}
        Because we ordered the generators of $S_{D^{(1)}}$ (the ``old'' generators) before all others (the ``new'' generators), the Gr\"obner basis for the relations among the old generators is unchanged from the one-point case. We call these the ``old'' relations.
        
        A $\Bbbk$-basis for the quotient space $S_{D}/S_{D^{(1)}}$ consists of one function $f_{(d,c)}^{(2)}$ of degree $d$ having pole order $c$ at $P^{(2)}$, for each $(d,c)$ with $1 \leq c \leq \alpha^{(2)} d$. Note that $c = 1$ need not be excluded now. Consequently, the relations between the new generators closely parallel the genus zero case. For $(d,c)$ in the angle $\angle v_i^{(2)} v_{i+1}^{(2)}$, the minimal monomial achieving degree $d$ and pole order $c$ is a product of the appropriate powers of the two consecutive generators $f_i^{(2)}$ and $f_{i+1}^{(2)}$, where $f_1^{(2)}$ must be replaced by $f_w$. Consequently, any product of two nonconsecutive new generators, or of a new and an old generator, is the leading term of a relation (a ``new'' relation).
        
        The new relations are all minimal because their leading terms are quadratic. It remains to consider whether an old, minimal relation can become non-minimal when the new generators and relations are added. (An old, non-minimal relation obviously remains non-minimal here.) Referring to Theorem \ref{thm:1point_rels}, there was only one case where a relation with a non-quadratic leading term was nonetheless minimal: the case $\{-1/\alpha^{(1)}\} \in [2/3, 3/4)$. Here there are generators $u = f_0, f_2^{(1)}, f_3^{(1)}$ corresponding to best lower approximations
        \[
        \frac{c_0}{d_0} = 1, \quad \frac{c_2}{d_2} = \frac{2}{2n-1}, \quad \frac{c_3}{d_3} = \frac{3}{3n - 2}
        \]
        (letting $n = d_1 = \ceil{1/\alpha^{(1)}}$). The relation in question has leading term $u {f_3^{(1)}}^2$, so if it is not minimal, then some other relation must have a term of $u f_3^{(1)}$ or ${f_3^{(1)}}^2$ to cancel it out. We claim there is no relation having either of these terms. This is clear for ${f_3^{(1)}}^2$ because its pole order of $6$ at $P^{(1)}$ is the largest possible in degree $6n - 4$, except possibly $\ord_{P^{(1)}}(f_4^{(1)}) = -7$ when $n = 1$, and no other monomial achieves this pole order. As to $u f_3$, we observe that the only possible monomials in degree $3n - 1$ having a pole of order at least $3$ at $P^{(1)}$ are
        \begin{equation} \label{eq:34567}
            u f_3, \quad {f_2^{(1)}}^2, \quad f_2^{(1)} f_3^{(1)}, \quad {f_3^{(1)}}^2, \quad f_4^{(1)}.
        \end{equation}
        (since $f_2^{(1)} f_w$, the first monomial of this sort outside $S_{D^{(1)}}$, has degree at least $(2n - 1) + (n + 1) > 3n - 1$). 
        The functions \eqref{eq:34567} have poles of distinct orders $3, 4, 5, 6, 7$ at $P^{(1)}$ and thus cannot figure in any relation, completing the proof.
    \end{proof}
    
    \subsubsection{The equal ceilings case}
    In this section we assume that
    \begin{equation} \label{eq:equal}
        \Ceil{\frac{1}{\alpha^{(1)}}} = \Ceil{\frac{1}{\alpha^{(2)}}}.
    \end{equation}
    
    \begin{definition} \label{def:term_order_equal}
        Let $D = \alpha^{(1)} P^{(1)} + \alpha^{(2)} P^{(2)}$ be an effective $\bbQ$-divisor on $C$ supported at two points, and assume \eqref{eq:equal}. We order the generators of $S_D$ in the following way:
        \[
        u = f_0 \prec f_w \prec \big[f_\ib\big] \prec
        f_2^{(1)} \prec \cdots \prec f_{r^{(1)}}^{(1)} \prec
        f_2^{(2)} \prec \cdots \prec f_{r^{(2)}}^{(2)},
        \]
        with the brackets because $f_\ib$ appears only when $\{-1/\alpha^{(1)}\} \in [0,1/2)$.
        We order monomials in the generators by the exponents of the generators, largest first (i.e.\ lex order).
    \end{definition}
    \begin{remark}
        Note the absence of comparison of degrees or pole orders in this term order, in contrast to Definition \ref{def:term_order_unequal}. It would be desirable to use a more uniform term order for all effective two-point divisors, but this leads to difficulties such as a profusion of cases and a heightened number of non-minimal relations in the Gr\"obner basis.
    \end{remark}
    
    \begin{theorem} \label{thm:2point_rels_equal}
        With this term order, a Gr\"obner basis for the relation ideal of $S_D$ has the following leading terms:
        \begin{enumerate}[$($a$)$]
            \item If $\{-1/\alpha^{(1)}\} \in [0,1/2)$:
            \begin{enumerate}[1.]
                \item $f_\ib^2$;
                \item $u f_i^{(k)}$, $i \geq 2$, $k \in \{1,2\}$;
                \item $f_w f_i^{(k)}$, $i \geq 3$, $k \in \{1,2\}$;
                \item $f_\ib f_i^{(k)}$, $i \geq 2$, $k \in \{1,2\}$;
                \item $f_i^{(k)} f_j^{(k)}$, $i \geq j + 2$, $k \in \{1,2\}$;
                \item $f_i^{(1)} f_j^{(2)}$, $i \geq 2$, $j \geq 2$.
            \end{enumerate}
            \item If $\{-1/\alpha^{(1)}\} \in [1/2,1)$:
            \begin{enumerate}[1.]
                \item $\boxed{f_2^{(1)} f_w^2}$;
                \item $u f_i^{(1)}$, $i \geq 3$;
                \item $u f_i^{(2)}$, $i \geq 2$;
                \item $f_w f_i^{(k)}$, $i \geq 3$, $k \in \{1,2\}$;
                \item $f_i^{(k)} f_j^{(k)}$, $i \geq j + 2$, $k \in \{1,2\}$;
                \item $f_i^{(1)} f_j^{(2)}$, $i \geq 2$, $j \geq 2$.
            \end{enumerate}
        \end{enumerate}
        Moreover, the relations comprising the Gr\"obner basis are all minimal, with the possible exception of the one with a cubic leading term (boxed), which is minimal if and only if
        \[
        \{-1/\alpha^{(1)}\} \in [{1}/{2}, {2}/{3}) \textand
        \{-1/\alpha^{(2)}\} \in [0, {1}/{2}).
        \]
    \end{theorem}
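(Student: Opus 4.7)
The plan is to adapt the strategy of Theorem~\ref{thm:2_pt_rels_unequal} (the unequal-ceilings case) and Theorem~\ref{thm:1point_rels} (the one-point relations) to the new coincidences forced by~\eqref{eq:equal}. The hypothesis makes $f_w^2$ inhabit the same degree as $f_\ib$ in case (a), or as $f_2^{(1)}$ in case (b); in both situations $f_w^2$ carries the same double-pole information at $P^{(1)}$, and this is the source of the extra boxed leading term in case (b). The lex order of Definition~\ref{def:term_order_equal}, which is blind to both degree and pole order, is chosen specifically so that these collisions surface as leading monomials.

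I would execute the proof in three steps. First, record the Hilbert function of $S_D$ via Riemann--Roch on the elliptic curve $C$: $\dim H^0(dD) = \lfloor d\alpha^{(1)}\rfloor + \lfloor d\alpha^{(2)}\rfloor$ for $d \geq 1$. Second, for each monomial $m$ in the claimed leading-term list, exhibit an actual relation $r_m$ with leading term $m$ via the uniform recipe of Theorem~\ref{thm:2_pt_rels_unequal}: inside the graded piece of $S_D$ containing $m$, find a lex-smaller monomial realizing the same pole profile at $(P^{(1)}, P^{(2)})$, subtract an appropriate scalar multiple to kill the top-order poles, and expand the remainder in still-smaller monomials. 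The paradigmatic examples are $f_\ib^2$ in case (a), which collides in degree $2(2 d_1 - 1)$ with words in $u$, $f_w$, and $f_\ib$; and the boxed $f_2^{(1)} f_w^2$ in case (b), where $f_w^2$ and $f_2^{(1)}$ both have degree $2 d_1 = \lceil 2/\alpha^{(1)}\rceil$ and pole order $2$ at $P^{(1)}$, so $f_w^2 - c\, f_2^{(1)}$ has strictly smaller pole at $P^{(1)}$ and multiplication by $f_2^{(1)}$ produces the desired cubic relation. The remaining leading terms $u f_i^{(k)}$, $f_w f_i^{(k)}$ for $i \geq 3$, $f_i^{(k)} f_j^{(k)}$ for $i \geq j+2$, and the cross terms $f_i^{(1)} f_j^{(2)}$ carry over directly from Theorem~\ref{thm:1point_rels} (via Table~\ref{tab:min_decomps}) and Theorem~\ref{thm:2_pt_rels_unequal}: each product overshoots the pole profile attainable by shorter products or by lower-degree $w$-combinations.

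Third, I would enumerate the standard monomials (those divisible by no claimed leading term), place each in its graded piece, and check that their count in each degree matches $\dim H^0(dD)$. Since the relations from the second step allow every monomial to be reduced to a $\Bbbk$-linear combination of standard monomials, the spanning is automatic; the dimension match then promotes spanning to a basis, which forces the claimed leading terms to exhaust the initial ideal and the constructed relations to form a Gr\"obner basis.

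The principal obstacle is this third step: because the lex order ignores both grading and pole order, the standard monomials must be sorted into graded pieces by hand, and within each piece one must confirm that distinct standard monomials realize distinct pole profiles at $(P^{(1)}, P^{(2)})$. In case (a), the three-way interaction between $f_\ib$, $f_w$, and the best-approximation generators reduces, via the minimal decompositions of Table~\ref{tab:min_decomps}, to a parity-and-angle count inside each cone $\angle v_i^{(k)} v_{i+1}^{(k)}$ depending on the fractional parts $\{-1/\alpha^{(k)}\}$. In case (b), one must further verify that the boxed relation excises precisely the right stratum of $f_w^{\geq 2} f_2^{(1)}$-monomials without disturbing the overall count; this is where the choice of lex over a graded order becomes essential, since a graded refinement would produce additional leading terms and a significantly more delicate minimization problem.
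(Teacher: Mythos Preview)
Your three-step outline (build relations, then count standard monomials against Riemann--Roch) is a legitimate strategy, but it is not the paper's, and step two contains degree errors that would block the argument as written. In case~(a) with $n=\lceil 1/\alpha^{(1)}\rceil$, the condition $\{-1/\alpha^{(1)}\}\in[0,1/2)$ gives $\lceil 2/\alpha^{(1)}\rceil = 2n$, so $f_\ib$ has degree $2n$ and $f_\ib^2$ degree $4n$, not $2(2d_1-1)$. More seriously, in case~(b) the condition $\{-1/\alpha^{(1)}\}\in[1/2,1)$ gives $\lceil 2/\alpha^{(1)}\rceil = 2n-1$, so $f_2^{(1)}$ has degree $2n-1$ while $f_w^2$ has degree $2n$: they do \emph{not} live in the same graded piece, and the expression ``$f_w^2 - c\,f_2^{(1)}$'' is not even homogeneous. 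Thus your proposed mechanism for the boxed cubic is wrong. The relation led by $f_2^{(1)} f_w^2$ actually lives in degree $4n-1$, where the eight monomials in $u,f_w,f_2^{(1)}$ (including $(f_2^{(1)})^2 u$, which is lex-\emph{larger} and remains standard) span a $7$-dimensional space; the lex order singles out $f_2^{(1)} f_w^2$ because among these monomials it has the \emph{smallest} $f_2^{(1)}$-exponent compatible with non-minimality, not because of any degree collision with $f_2^{(1)}$.

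The paper bypasses your third step entirely. Rather than a global Hilbert count against a non-graded lex order, it introduces a filtration $S_{D'}\subseteq S_{D''}\subseteq S_D$ by subdivisors $D' = \frac{2}{\lceil 2/\alpha^{(1)}\rceil}P^{(1)} + \frac{1}{n}P^{(2)}$ and $D'' = \frac{2}{\lceil 2/\alpha^{(1)}\rceil}P^{(1)} + \alpha^{(2)}P^{(2)}$, chosen so that $S_{D'}$ is generated by exactly the bottom three generators $u,f_w,f_\ib$ (resp.\ $u,f_w,f_2^{(1)}$). The single relation on $S_{D'}$ is verified by an explicit basis $\{f_w^i u^j,\ f_\ib f_w^i u^j\}$ (and analogously in case~(b)); the quotients $S_{D''}/S_{D'}$ and $S_D/S_{D''}$ are then indexed by pairs $(d,c^{(k)})$ lying strictly above the first approximation line at $P^{(k)}$, and for these the minimal monomial is a product of \emph{consecutive} generators $(f_i^{(k)})^a (f_{i+1}^{(k)})^b$ exactly as in the genus-zero one-point theory, with $f_w$ standing in for the missing $f_1^{(k)}$. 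This filtration is what makes the lex order work cleanly: each layer adds generators strictly above all previous ones, so the lex reduction respects the filtration, and no direct enumeration in a fixed degree is required. Your Table~\ref{tab:min_decomps} is a one-point artifact and does not transfer; the equal-ceilings hypothesis is precisely what makes the one-point monoid picture inapplicable here.
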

    
    \begin{proof}
        The proof of this theorem is somewhat different from the preceding ones owing to the different term order. Let $n = \ceil{1/\alpha^{(1)}} = \ceil{1/\alpha^{(2)}}$, and define the subdivisors
        \begin{align*}
            D' &= \frac{2}{\ceil{2/\alpha^{(1)}}} P^{(1)} + \frac{1}{n} P^{(2)} = \begin{cases}
                \frac{1}{n} P^{(1)} + \frac{1}{n} P^{(2)}, \{-1/\alpha^{(1)}\} \in [0,1/2) \\
                \frac{2}{2n - 1} P^{(1)} + \frac{1}{n} P^{(2)}, \{-1/\alpha^{(1)}\} \in [1/2,1)
            \end{cases} \\
            D'' &= \frac{2}{\ceil{2/\alpha^{(1)}}} P^{(1)} + \alpha^{(2)} P^{(2)}.
        \end{align*}
        The significance of the resulting filtration of the section rings $S_{D'} \subseteq S_{D''} \subseteq S_D$ is that
        \begin{enumerate}[$($a$)$]
            \item $S_{D'}$ is generated by the lowest three generators: $u$, $f_w$, and either $f_\ib$ ($\{-1/\alpha^{(1)}\} \in [0,1/2)$) or $f_2^{(1)}$ ($\{-1/\alpha^{(1)}\} \in [1/2,1)$);
            \item $S_{D''}$ is generated by $S_{D'}$ and the remaining generators $f_i^{(1)}$;
            \item $S_D$ is generated by $S_{D''}$ and the remaining generators $f_i^{(2)}$.
        \end{enumerate}
        These claims are not hard to show. For brevity, we focus on the case $\{-1/\alpha^{(1)}\} \in [1/2,1)$, the other being analogous. Here $S_D$ has a relation because the element $f_2^{(1)} f_w^2$, with degree and pole orders $(d, c^{(1)}, c^{(2)}) = (4n - 1, 4, 2)$, can be expressed as a linear combination of terms ${f_2^{(1)}}^2 u$ $(4n - 1, 4, 0)$, $f_w^2 u^{2n-1}$ $(4n-1, 2, 2)$, and elements with lower pole orders (namely $f_2^{(1)}f_w u^n, f_2^{(1)} u^{2n}, f_w u^{3n-1}$, and $u^{4n-1}$). Applying this relation, we can express any polynomial in the first three generators as a linear combination of terms $f_w^i u^j$, ${f_2^{(1)}}^i u^j$, and ${f_2^{(1)}}^i f_w u^j$. Specifically, in degree $d \geq n$, we have the elements
        \begin{equation} \label{eq:basis_S_D'}
            \begin{aligned}
                &f_w^i u^{d - ni}                 ,& 0 &\leq i \leq \frac{d}{n} \\
                &{f_2^{(1)}}^i u^{d - (2n-1)i}    ,& 1 &\leq i \leq \frac{d}{2n-1} \\
                &{f_2^{(1)}}^i f_w u^{d - (2n-1)i},& 1 &\leq i \leq \frac{d - n}{2n - 1}.
            \end{aligned}
        \end{equation}
        Comparing pole orders shows these are all linearly independent, and the number of them is
        \begin{align*}
            1 + \Floor{\frac{d}{n}} + \Floor{\frac{d}{2n-1}} + \Floor{\frac{d - n}{2n - 1}}
            &= 1 + \Floor{\frac{d}{n}} + \Floor{\frac{d}{2n-1}} + \Floor{\frac{d - n + 1/2}{2n - 1}} \\
            &= \Floor{\frac{d}{n}} + \Floor{\frac{d}{2n-1}} + \Floor{\frac{d}{2n - 1} + \frac{1}{2}} \\
            &= \Floor{\frac{d}{n}} + \Floor{\frac{2d}{2n-1}} \\
            &= \dim (S_{D'})_{\deg = d},
        \end{align*}
        so in fact \eqref{eq:basis_S_D'} are a basis for $S_{D'}$, so there are no more generators or relations needed.
        
        The generation of the quotient spaces $S_{D''} / S_{D'}$ and $S_D / S_{D'}$ follows the genus zero case: a $\Bbbk$-basis is indexed by combinations $(d,c^{(k)})$ of degree and pole order at the respective point with
        \begin{alignat*}{2}
            \frac{2}{\ceil{2/\alpha^{(1)}}} d &< c^{(1)} \leq \alpha^{(1)} d, & \quad k &= 1\ (\text{for } S_{D''} / S_{D'}) \\
            \frac{1}{n} d &< c^{(2)} \leq \alpha^{(2)} d, & \quad k &= 2\ (\text{for } S_{D} / S_{D''}),
        \end{alignat*}
        and each $(d,c^{(k)})$ is minimally achieved by a product of consecutive generators ${f_i^{(k)}}^a {f_{i+1}^{(k)}}^b$, where $f_w$ stands in for both $f_1^{(1)}$ and $f_1^{(2)}$. Consequently, any product of two generators not of this type is the leading term of a relation, as claimed.
        
        It remains to determine whether the relation with the cubic, boxed leading term $f_2^{(1)} f_w^{2}$ (degree $4n-1$) is minimal. We divide into cases by the values of the $\{-1/\alpha^{(i)}\}$, as claimed in the theorem.
        
        If $\{-1/\alpha^{(1)}\} \in [2/3, 1)$, that is, $\alpha^{(1)} \geq 3/(3n-2)$, there is a best lower approximation $c_3^{(1)}/d_3^{(1)} = 3/(3n-2)$ giving a generator $f_3^{(1)}$ in degree $3n - 2$ with a triple pole at $P^{(1)}$. Using the relations
        \begin{align*}
            R_1 &= f_3^{(1)} u + \cdots \\
            R_2 &= f_3^{(1)} f_w + \cdots,
        \end{align*}
        we take a linear combination $R = f_w R_1 - u R_2$, causing the leading terms $f_3^{(1)} f_w u$ to cancel. Observe that $R_1$ has a term $f_2^{(1)} f_w$, the only possible monomial in degree $4n - 1$ with a triple pole at $P^{(1)}$ below $f_3^{(1)} u$ in the term ordering. So $R$ has a term $f_2^{(1)} f_w^{2}$. All other terms of $f_w R_1$ and $u R_2$, except the leading terms which cancel, are lower in the term ordering (namely $f_2^{(1)}f_w u^n, f_2^{(1)} u^{2n}, f_w u^{3n-1}$, and $u^{4n-1}$). So we have achieved a relation $R$ with the desired leading term, showing that the boxed Gr\"obner basis element is not minimal.
        
        Similarly, if $\{-1/\alpha^{(2)}\} \in [1/2, 1)$, that is, $\alpha^{(2)} \geq 2/(2n - 1)$, there is a best lower approximation $c_2^{(2)}/d_2^{(2)} = 2/(2n - 1)$ giving a generator $f_2^{(2)}$ in degree $2n - 1$ with a double pole at $P^{(2)}$. Using the relations
        \begin{align*}
            R_1 &= f_2^{(2)} u + \cdots \\
            R_2 &= f_2^{(2)} f_2^{(1)} + \cdots,
        \end{align*}
        we form a combination
        \[
        R = f_2^{(1)} R_1 - u R_2 - c u^{2n-1} R_1,
        \]
        where the first two terms have canceling leading terms $f_2^{(2)} f_2^{(1)} u$, and the constant $c$ is chosen to cancel out the term $f_2^{(2)} u^{2n}$ which may appear in $u R_2$. The highest remaining term is $f_2^{(1)} f_w^2$, which must appear with a nonzero coefficient because $f_w^2$ is the only other term in $R_1$ that can have a double pole at $P^{(1)}$. So, again, the boxed Gr\"obner basis element is not minimal.
        
        Finally, if $\{-1/\alpha^{(1)}\} \in [{1}/{2}, {2}/{3})$ and $\{-1/\alpha^{(2)}\} \in [0, {1}/{2})$, we claim that the boxed Gr\"obner basis element is minimal. If not, it arises by canceling the leading terms of other relations, so there must be a monomial in the generators of degree $4n - 1$ divisible by two different leading terms of relations. We have $\deg f_2^{(2)} \geq 3n - 1$, $\deg f_3^{(2)} \geq 4n - 2$, and $\deg f_3^{(1)} \geq 5n - 3$, so we can restrict our sights to $u$, $f_w$, $f_2^{(1)}$, and $f_2^{(2)}$. The only relations among these have leading terms $u f_2^{(2)}$, $f_w f_2^{(2)}$, and $f_2^{(1)} f_w^2$, all of which have degree at least $4n - 1$, so this is impossible.
    \end{proof}
    
    \begin{remark}
        The structure of the ring is somewhat simpler in this case and much more nearly symmetric between $\alpha^{(1)}$ and $\alpha^{(2)}$.
    \end{remark}
    
    \section{The subtle behavior of the ineffective two-point case} 
    \label{sec:unans}
    
    It would be desirable to extend the results of this paper to ineffective two-point divisors $D$ having positive multiplicity at one point and negative at the other. However, in this situation, the section ring depends on the choice of curve $C$ and divisor $D$ in a much more subtle way.
    
    \begin{example}
        Let $D = 4 P^{(1)} - P^{(2)}$. In degree $1$, we have three generators $t_2 u$, $t_3 u$, $t_4 u$. In degree $2$, the question arises of whether the six pairwise products $u^2 t_i t_j$ of the generators fill the six-dimensional space
        \[
        \left(S_D\right)_{\deg = 2} = u^2\cdot\<t_3, t_4, t_5, t_6, t_7, t_8\>.
        \]
        In fact they do. If there were a linear relation among these products, a consideration of zero and pole orders at the $P^{(k)}$ shows that it would have to have the form
        \[
        t_3^2 = c t_2 t_4, \quad c \in \Bbbk^\cross,
        \]
        but the two sides do not have the same divisor. Hence degree $1$ generates degree $2$, and in particular $t_3$ is a linear combination of the products $t_i t_j$, $2 \leq i \leq j \leq 4$. In fact, we can be more explicit:

        Taking $y = t_3$ and $x = t_2$ as coordinates, we get a Weierstrass equation of the curve
        \[
        C : y^2 + a_1 x y + a_3 y = x^3 + a_2 x^2 + a_4 x + a_6
        \]
        where $P^{(1)}$ is the point at $\infty$ and $P^{(2)}$ is the origin $(0,0)$. We must have
        \begin{itemize}
            \item $a_6 = 0$ since $(0,0)$ lies on $C$;
            \item $a_4 = 0$ since $y$ has a double zero at $(0,0)$;
            \item $a_3 \neq 0$ since $C$ is nonsingular at $(0,0)$;
            \item $a_2 \neq 0$ or else $y$ would have a triple zero at $(0,0)$.
        \end{itemize}
        Then
        \[
        t_4 = x^2 - \frac{a_3}{a_2} y,
        \]
        and the curve's equation can be written as
        \[
        t_3 = \frac{1}{a_3} \left( t_2 t_4 - \frac{a_3}{a_2} t_2 t_3 + a_2 t_2^2 - t_3^2 \right),
        \]
        showing that indeed $t_3 \in \<t_2 t_4, t_3^2, t_2 t_3, t_2^2\>$. Note that $t_3$'s pole of order $3$ at $P^{(1)}$ arises from canceling functions with poles of order $6$, $6$, $5$, and $4$.
    \end{example}
    
    \begin{example}
        \label{example: subtle 2pt behavior}
        Let $p$ and $q$ be coprime positive integers. By the Euclidean algorithm, there are positive integers $a$ and $b$ such that $aq - bp = 1$. Let $P^{(1)}$ and $P^{(2)}$ be points whose difference is not torsion, and consider the divisor
        \[
        D = \frac{a}{p} P^{(1)} - \frac{b}{q} P^{(2)}
        \]
        of degree $1/(pq)$. For $d > 0$, the dimension of $(S_D)_{\deg = d}$ is
        \[
        \dim (S_D)_{\deg = d} = \max\left\{0, \Floor{\frac{ad}{p}} - \Ceil{\frac{bd}{q}} - 1 \right\}. 
        \]
        We recognize the right side as the number of ways of writing $d - pq$ in the form $xp + yq$ where $x,y \in \bbZ_{\geq 0}$. The least degree in which there is any element is $d = pq$. For $pq \leq d < 2pq$, the degrees in which $S_D$ is nonzero are of the form $d = pq + k$ where $k$ belongs to the Frobenius (symmetric, two-generated) semigroup
        \[
        \<p, q\> = \{ xp + yq : x,y \in \bbZ_{\geq 0} \},
        \]
        and due to the degree bound $pq \leq d < 2pq$, none of these can generate each other. This yields an intricate pattern of generators and relations.
    \end{example}
    
    \subsection{A conjecture on generators}
    
    Suppose that $P^{(1)}$ and $P^{(2)}$ are two points on $C$ such that $P^{(1)}-P^{(2)}$ is not a torsion class in the Picard group (the generic case). For $c \geq 0$, let $t_c$ denote the unique function (up to scaling) on $C$ with
    \[\div(t_c^{(1)})=-c(P^{(1)})+(c-1)(P^{(2)})+(cP^{(1)}\oplus(1-c)P^{(2)}),\] where $\oplus$ denotes addition in the group law on $C$, as opposed to formal addition of divisors. Observe that if $c \geq 2$, then 
    \[\ord_{P^{(1)}}t_c = -c \textand \ord_{P^{(2)}}t_c = c-1.\]
    
    \begin{lemma}
        \label{lemma: Z-basis for ineffective 2pt case}
        Let $D=\alpha^{(1)}(P^{(1)})-\alpha^{(2)}(P^{(2)})$ be a $\bbZ$-divisor on an elliptic curve supported at two points, where $\alpha^{(1)}>\alpha^{(2)}\geq 0.$ Then $H^0(D)$ has dimension $\alpha^{(1)}-\alpha^{(2)}$ with a basis consisting of the functions
        \begin{itemize}
            \item $\{1,t_2,\ldots, t_{\alpha^{(1)}}\}$ if $\alpha^{(2)}=0$;
            \item $\{t_{\alpha^{(2)}+1},\ldots, t_{\alpha^{(1)}}\}$ if $\alpha^{(2)} > 0$.
        \end{itemize}
    \end{lemma}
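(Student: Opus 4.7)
The plan is to compute $\dim H^0(D)$ by Riemann-Roch and then show that the proposed functions form a basis by exhibiting them inside $H^0(D)$ and checking linear independence via pole orders at $P^{(1)}$.

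First, I would apply Riemann-Roch on the elliptic curve $C$, using that the canonical divisor is trivial so that $\ell(D) - \ell(-D) = \deg D$. Under the hypothesis $\alpha^{(1)} > \alpha^{(2)} \geq 0$ we have $\deg D = \alpha^{(1)} - \alpha^{(2)} \geq 1$, hence $\deg(-D) < 0$ and $\ell(-D) = 0$. So $\dim H^0(D) = \alpha^{(1)} - \alpha^{(2)}$, which already matches the cardinality of each claimed list: $\alpha^{(1)}$ functions when $\alpha^{(2)} = 0$ (the constant $1$ together with $t_2^{(1)}, \ldots, t_{\alpha^{(1)}}^{(1)}$, since there is no $t_1^{(1)}$), and $\alpha^{(1)} - \alpha^{(2)}$ functions when $\alpha^{(2)} \geq 1$.

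Next, I would check that the listed functions lie in $H^0(D)$. A function $f$ lies in $H^0(D)$ exactly when $\ord_{P^{(1)}}(f) \geq -\alpha^{(1)}$, $\ord_{P^{(2)}}(f) \geq \alpha^{(2)}$, and $f$ has no other poles. Using $\ord_{P^{(1)}} t_c^{(1)} = -c$ and $\ord_{P^{(2)}} t_c^{(1)} = c - 1$ for $c \notin \{0,1\}$ (recalled just before the lemma), both inequalities are immediate for each function on the two lists, and the constant $1$ satisfies them trivially when $\alpha^{(2)} = 0$. The only other zero of $t_c^{(1)}$ lies at the third point $cP^{(1)} \oplus (1-c)P^{(2)}$, which contributes no poles.

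Finally, within each list the functions have pairwise distinct pole orders at $P^{(1)}$, so they are linearly independent over $\Bbbk$. Combined with the dimension count from the first step, this forces them to span, giving a basis. The main obstacle is purely a bookkeeping one: correctly accounting for the boundary case $c \in \{0,1\}$ where $t_c^{(1)}$ degenerates to a constant, and handling the split between $\alpha^{(2)} = 0$ (where $1$ must be included) and $\alpha^{(2)} \geq 1$ (where $1$ no longer has enough vanishing at $P^{(2)}$). The non-torsion hypothesis on the class $P^{(2)} - P^{(1)}$ does not enter this lemma directly, but is what ensures the third zero of $t_c^{(1)}$ is distinct from both $P^{(1)}$ and $P^{(2)}$, so that the orders invoked above are exact rather than merely lower bounds.
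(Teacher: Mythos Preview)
Your proof is correct and follows essentially the same approach as the paper: compute the dimension via Riemann--Roch (using $\deg D \geq 1$ so that $\ell(-D)=0$), verify the listed functions lie in $H^0(D)$, and conclude they form a basis because their pole orders at $P^{(1)}$ are pairwise distinct. Your additional remarks on the boundary cases $c\in\{0,1\}$ and on the role of the non-torsion hypothesis in guaranteeing exact (rather than merely bounded) pole orders are accurate and simply make explicit what the paper's one-line proof leaves implicit.
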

    \begin{proof}
        Since $\deg D = \alpha^{(1)}-\alpha^{(2)}\geq 1$, the Riemann-Roch theorem gives us the dimension $h^0(D)$. We check that the claimed functions belong to $H^0(D)$ and have different pole orders at $P^{(1)}$, so they must form a basis. 
    \end{proof}

    \begin{figure}[ht]
        \hfill
        \begin{picture}(2.4,7)(-0.2,-1)
            % Axes
            \drawline(0,0)(2,0)
            \drawline(0,0)(0,6)
            
            % Boundary line
            \drawline(0,0)(1.2,6)
            
            % Solid dots  (1,0)(1,2)(1,3)(1,4)(1,5)
            \solid{(1,0)}\solid{(1,2)}\solid{(1,3)}\solid{(1,4)}\solid{(1,5)}
            
            % +'s 
            \xdot{(1,1)}\xdot{(2,1)}
            
            % Open dots
            \open{(0,0)}
            \open{(2,0)}\open{(2,2)}\open{(2,3)}\open{(2,4)}\open{(2,5)}
            
            %label
            \put(1,-1){\makebox(0,1)[b]{$5P^{(1)}$}}
        \end{picture}
        \hfill
        \begin{picture}(2.4,7)(-0.2,-1)
            % Axes
            \drawline(0,0)(2,0)
            \drawline(0,0)(0,6)
            
            % Boundary lines
            \drawline(0,0)(1.2,6)
            \drawline(0,0)(2.2,2.2)
            
            % Solid dots (1,2)(1,3)(1,4)(1,5)
            \solid{(1,2)}\solid{(1,3)}\solid{(1,4)}\solid{(1,5)}
            
            % +'s 
            \xdot{(1,1)}\xdot{(2,2)}
            
            % Open dots
            \open{(2,3)}\open{(2,4)}\open{(2,5)}
            
            %label
            \put(1,-1){\makebox(0,1)[b]{$5P^{(1)}-P^{(2)}$}}
        \end{picture}\hfill
        \begin{picture}(2.4,7)(-0.2,-1)
            % Axes
            \drawline(0,0)(2,0)
            \drawline(0,0)(0,6)
            
            % Boundary lines
            \drawline(0,0)(1.2,6)
            \drawline(0,0)(2.4,4.8)
            
            % Solid dots (1,3)(1,4)(1,5)
            \solid{(1,3)}\solid{(1,4)}\solid{(1,5)}
            
            % +'s 
            \xdot{(1,2)}\xdot{(2,4)}
            
            % Open dots
            \open{(2,5)}
            
            %label
            \put(1,-1){\makebox(0,1)[b]{$5P^{(1)}-2P^{(2)}$}}
        \end{picture}\hfill
        \begin{picture}(2.4,11.5)(-0.2,-1)
            % Axes
            \drawline(0,0)(2.1,0)
            \drawline(0,0)(0,10.5)
            
            % Boundary lines
            \drawline(0,0)(2.1,10.5)
            \drawline(0,0)(2.1,6.3)
            
            % Solid dots (1,4)(1,5)(2,7)
            \solid{(1,4)}\solid{(1,5)}\solid{(2,7)}
            
            % +'s 
            \xdot{(1,3)}\xdot{(2,6)}
            
            % Open dots
            \open{(2,8)}\open{(2,9)}\open{(2,10)}
            
            %label
            \put(1,-1){\makebox(0,1)[b]{$5P^{(1)}-3P^{(2)}$}}
        \end{picture}\hfill
        \begin{picture}(3.4,16.5)(-0.2,-1)
            % Axes
            \drawline(0,0)(3.1,0)
            \drawline(0,0)(0,15.5)
            
            % Boundary lines
            \drawline(0,0)(3.1,15.5)
            \drawline(0,0)(3.1,12.4)
            
            % Solid dots (1,5)(2,9)(3,13)
            \solid{(1,5)}\solid{(2,9)}\solid{(3,13)}
            
            % +'s  (1,4)(2,8)(3,12)
            \xdot{(1,4)}\xdot{(2,8)}\xdot{(3,12)}
            
            % Open dots (2,10)(3,14)(3,15)
            \open{(2,10)}
            \open{(3,14)}\open{(3,15)}
            
            %\label
            \put(1.5,-1){\makebox(0,1)[b]{$5P^{(1)}-4P^{(2)}$}}
        \end{picture}
        \hfill {}
        \caption{Example bases for $S_D$ with $D$ as in Lemma \ref{lemma: Z-basis for ineffective 2pt case}
        }
        \label{fig:2pt ineffective Z-basis}
    \end{figure}
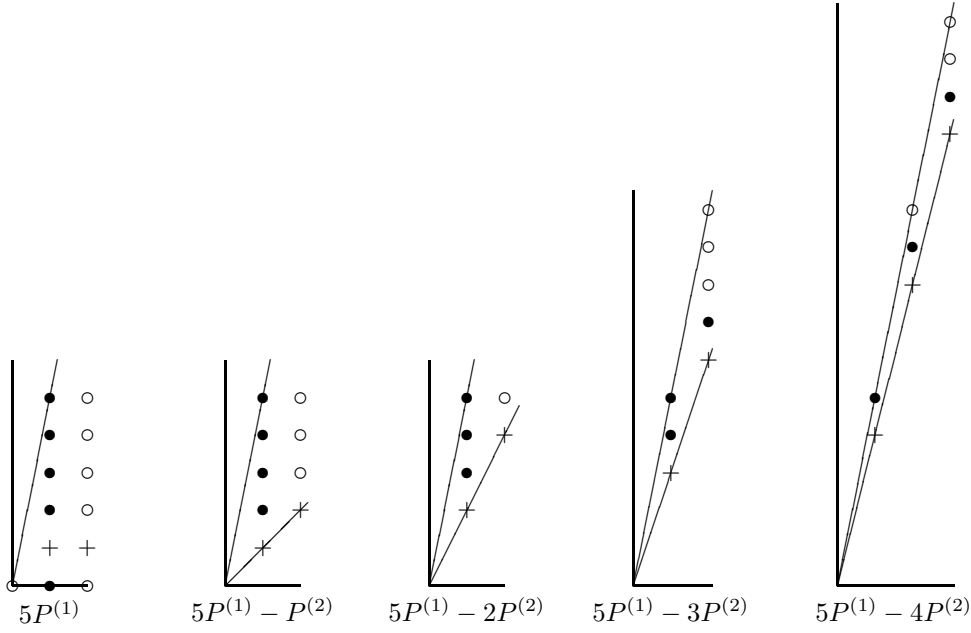
    
    Let $D=\alpha^{(1)}(P^{(1)})-\alpha^{(2)}(P^{(2)})$ be a $\bbQ$-divisor supported at the two points $P^{(k)}$, and suppose that $\alpha^{(1)} > \alpha^{(2)} > 0$ so that $S_D$ is non-trivial. Since $P^{(1)} - P^{(2)}$ is not torsion, $H^0(dD)$ does not contain any function $f$ of degree $d$ with $\ord_{P^{(1)}}(f)=\ceil{d\alpha^{(2)}}$. Accordingly, we mark the points $(d,\ceil{d\alpha^{(2)}})$ in the monoid $M'=\{(d,c)\in \bbZ^2:d\alpha^{(2)}\leq c\leq d\alpha^{(1)}\}$ by a `+' in our examples.

   % \newpage
    \begin{conjecture}
        \label{conj:2point ineffective}
        Let $D=\alpha^{(1)}(P^{(1)})-\alpha^{(2)}(P^{(2)})$ be an ineffective ($\alpha^{(1)}>\alpha^{(2)} > 0$) $\bbQ$-divisor on an elliptic curve $C$ supported at two points $P^{(k)},$ where $P^{(1)}-P^{(2)}$ is not a torsion class in the Picard group (the generic case). Let 
        \[0=\frac{c_0^{(1)}}{d_0^{(1)}}<\frac{c_1^{(1)}}{d_1^{(1)}}<\cdots<\frac{c_r^{(1)}}{d_r^{(1)}}=\alpha^{(1)};~ 0=\frac{c_0^{(2)}}{d_0^{(2)}}>\frac{c_1^{(2)}}{d_1^{(2)}}>\cdots>\frac{c_s^{(2)}}{d_s^{(2)}}=\alpha^{(2)}\] be the best lower approximations to $\alpha^{(1)}$ and the best upper approximations to $\alpha^{(2)}$ respectively.
        Let $M=\{(d,c)\in \bbZ^2: d\alpha^{(2)} + 1 \leq c\leq d\alpha^{(1)}\}.$
        
        Then $S_D$ has a minimal system of generators of the following forms:
        \begin{enumerate}[$($a$)$]
            \item $t_{c_j^{(1)}} u^{d_j^{(1)}},$ for $j=s^{(2)}+1,\ldots, r^{(1)}$ if $c_j^{(1)}> \ceil{ d_j^{(2)}\alpha^{(2)}}$ for some $j$; 
            \item $t_{c_j^{(2)}+1} u^{d_j^{(2)}},$ for $j=2,\ldots, s^{(2)}$ if such a generator has not already appeared and no $(d_j^{(2)},c_j^{(2)}+n)\in M$ with $n\geq 2$ is (at least) two distinct nonnegative linear combinations of $(d,c)\in M$ with $d<d_j^{(2)}$; 
            \item $t_{c_j^{(2)}+2}u^{d_j^{(2)}},$ for $j=3,\ldots, s^{(2)}$ if all of the following conditions are met:
            \begin{itemize}
                \item $c_j^{(2)}+2\leq d_j^{(2)}\alpha^{(1)},$
                \item $(d_j^{(2)}, c_j^{(2)}+2)$ is not a nonnegative linear combination of any $(d,c)\in M$ with $d<d_j^{(2)},$ and
                \item no point $(d_j^{(2)},c_j^{(2)}+n)\in M$ for $n\geq 3$ is (at least) two distinct nonnegative linear combinations of $(d,c)\in M$ with $d<d_j^{(2)}.$
            \end{itemize}
        \end{enumerate}
    \end{conjecture}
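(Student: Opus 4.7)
The plan is to mimic the combinatorial reduction used in Theorem \ref{thm:1point_gens} and Theorem \ref{thm:2point_gens}, translating the problem of finding generators of $S_D$ into one of finding irreducibles in a lattice monoid, while tracking the extra wrinkle that two formally distinct decompositions of a lattice point can collapse to proportional functions when the underlying divisor constraint from the elliptic group law is satisfied. Concretely, I would use Lemma \ref{lemma: Z-basis for ineffective 2pt case} to identify, for each degree $d$, a basis of $H^0(dD)$ consisting of functions $t_c^{(1)} u^d$ indexed by the lattice points $(d,c) \in M = \{(d,c) \in \bbZ^2 : d\alpha^{(2)} + 1 \leq c \leq d\alpha^{(1)}\}$. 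Unlike in the effective cases, $M$ is not an atomic submonoid of $\bbZ^2$ (products of generators can overshoot the upper slope or fall below the lower slope), so I would instead think of the generators as a minimal spanning set of $S_D$ as a $\Bbbk$-algebra.

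First, I would show sufficiency of the candidate generators. For a target $(d,c) \in M$ I would attempt to write the function $t_c^{(1)} u^d$ as a polynomial in the candidate generators by induction on $d$, splitting based on which of the angles $\angle v_j^{(1)} v_{j+1}^{(1)}$ (for the $\alpha^{(1)}$-side) or the analogous ``upper'' angles for the $\alpha^{(2)}$-approximations the point falls into. The subtlety — and the reason for the three-part case split \ref{it:best}, \ref{it:ord2}, \ref{it:ord3} in the conjecture — is that a product of two generators landing at the same $(d,c)$ in general gives a function whose divisor agrees with $t_c^{(1)}u^d$ only up to the group-law correction term in $\div(t_c^{(k)})$. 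The assumption that $P^{(2)} - P^{(1)}$ is non-torsion is exactly what prevents coincidental cancellations: it guarantees that distinct nonnegative lattice decompositions of the same $(d,c)$ yield $\Bbbk$-linearly independent products, so that whenever $(d,c)$ admits two such decompositions we can solve for the lower-pole-order function $t_c^{(1)}u^d$ in terms of products, whereas if there is only one decomposition we cannot, and a new generator is forced.

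Next I would establish necessity (minimality). For each claimed generator $f = t_c^{(1)} u^d$, I would consider the subring $S'$ generated by all the other candidates and show $f \notin S'$ by exhibiting a linear functional on $(S_D)_{\deg = d}$ that vanishes on every product of strictly lower-degree generators but not on $f$. In the three cases of the conjecture this reduces to the three listed arithmetic conditions: in \ref{it:best}, the condition $c_j^{(1)} > \ceil{d_j^{(2)} \alpha^{(2)}}$ says the lattice point lies strictly above the ``forbidden'' $+$-marked row, so inheritance from the one-point theory applies; in \ref{it:ord2}, the uniqueness of a nonnegative decomposition of every point above $(d_j^{(2)}, c_j^{(2)}+1)$ in $M$ means the pole of order $c_j^{(2)}+1$ at $P^{(1)}$ in degree $d_j^{(2)}$ cannot be produced by cancellation; and in \ref{it:ord3}, the three bullet points say respectively that $(d_j^{(2)}, c_j^{(2)}+2)$ is admissible, that it is not already produced in lower degree, and that no point lying above it in the same column admits a cancellation argument, which is exactly what is needed for $t_{c_j^{(2)}+2}^{(1)} u^{d_j^{(2)}}$ to be irreducible.

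The main obstacle will be the linear algebra in the necessity direction, in particular verifying that the ``two distinct non-negative linear combinations'' condition is both sharp and correctly formulated. Unlike in the effective case, where one has enough room in $H^0(dD)$ to separately cancel poles at $P^{(1)}$ and $P^{(2)}$, here the basis of $H^0(dD)$ has only one function per pole order at $P^{(1)}$, and this tight dimension count interacts delicately with the group-law divisor correction in $t_c^{(k)}$. I would expect a careful case analysis via the lattice picture in Figure \ref{fig:2pt ineffective Z-basis}, plus a Picard-group argument using non-torsion of $P^{(2)} - P^{(1)}$ to rule out accidental relations, to be the core of the proof; the risk is that the stated conditions need small corrections in boundary cases (e.g.\ when $\alpha^{(2)}$ is small or when $\ceil{2/\alpha^{(1)}}$ collides with $d_1^{(2)}$), which is why the statement is advanced here only as a conjecture.
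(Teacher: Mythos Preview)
The paper does not prove this statement: it is explicitly labeled a conjecture, and the remark immediately following it says the conditions for the type-(b) and type-(c) generators ``seem necessary but possibly not sufficient.'' So there is no paper proof to compare against, and your proposal is an attempted proof of an open statement.

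Your outline follows the natural strategy, but the sufficiency direction has a genuine gap. You argue that when a lattice point $(d,c)$ admits two distinct nonnegative decompositions, the non-torsion hypothesis makes the corresponding products $\Bbbk$-linearly independent, so a suitable combination drops to pole order strictly below $c$. Even granting the independence (which for products of more than two factors is not as immediate as you suggest), this only yields an element of pole order \emph{less than} $c$; it does not tell you \emph{which} lower pole order you land on. In the effective cases this is harmless because $M$ is a genuine submonoid and every $(d,c)$ has its own decomposition, so one never needs cancellation to reach a given pole order. Here $M$ is not closed under addition, and in fact any sum of two elements of $M$ has $c$-coordinate at least $d\alpha^{(2)} + 2$, so the bottom element of each column has \emph{no} decomposition in $M$ and can only be reached by cancellation from above. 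Showing that such cancellations land on every required pole order, rather than skipping some, is exactly the content of sufficiency, and it requires control over the auxiliary group-law zeros in $\div(t_c^{(1)})$ far beyond ``two decompositions exist somewhere in the column.'' The authors' own caveat, together with the Frobenius-semigroup behavior in Example~\ref{example: subtle 2pt behavior}, indicates that this is where the real difficulty lies; your proposal does not close it, and you yourself concede at the end that the conditions may need correction.
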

    
    \begin{remark}
        Each type of generator in Conjecture \ref{conj:2point ineffective} is minimal because of how we have defined them. As in \cite{O'Dorney}, Theorems \ref{thm:1point_gens} and \ref{thm:2point_gens} and \cite{Landesman-Ruhm-Zhang-Spin-canonical-rings}, no best lower approximation to $\alpha^{(1)}$ comes from combining functions in lower degrees, and by definition no generator of type $\ib$ or $\ic$ corresponds to a linear combination of other generators, nor some difference of functions as in the proof of Theorem \ref{thm:2point_gens}. For examples of $D$ with sufficiently large degrees we can use Magma to determine the degrees of minimal generators for $S_D$ including Example \ref{example: D=2/3P^{(1)}-3/5P^{(2)}} which has the subtle behavior of Example \ref{example: subtle 2pt behavior}, and generators from Conjecture \ref{conj:2point ineffective} seem to give a basis.
        
        However, it is difficult to verify this conjecture rigorously, especially in cases such as Example \ref{example: subtle 2pt behavior} with small degree, as Magma needs to check for generators in such large degrees as to be computationally prohibitive. Even if Conjecture \ref{conj:2point ineffective} is true, the question remains of whether we can find a simpler description of the generators which does not rely on manually working out every possible linear combination of vectors in the monoid $M$ while successively adding generators in the order indicated by Definition \ref{def:term_order_1pt}.
    \end{remark}

    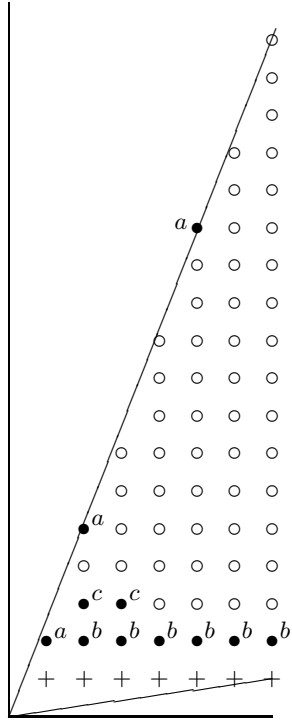
\begin{figure}[htbp]
        \centering
        \begin{picture}(7,19)(-0.5,-0.5)
            % Axes
            \drawline(0,0)(7,0)
            \drawline(0,0)(0,19)
            
            % Boundary lines
            \drawline(0,0)(7.1,18.3)
            \drawline(0,0)(7,1)
            
            % Solid dots (1,2)(2,3)(2,5)(5,13)(2,2)(3,2)(4,2)(5,2)   (6,2)(7,2)(3,3)
            \solid{(1,2)}\solid{(2,3)}\solid{(2,5)}\solid{(5,13)}\solid{(2,2)}\solid{(3,2)}\solid{(4,2)}\solid{(5,2)}\solid{(6,2)}\solid{(7,2)}\solid{(3,3)}
            % Dot labels
            \put(1.2,2.1){\makebox(0,0)[bl]{\small $a$}} 
            \put(2.2,3.1){\makebox(0,0)[bl]{\small $c$}} 
            \put(2.2,5.1){\makebox(0,0)[bl]{\small $a$}} 
            \put(4.4,13.1){\makebox(0,0)[ul]{\small $a$}} 
            \put(2.2,2.1){\makebox(0,0)[bl]{\small $b$}} 
            \put(3.2,2.1){\makebox(0,0)[bl]{\small $b$}}
            \put(4.2,2.1){\makebox(0,0)[bl]{\small $b$}} 
            \put(5.2,2.1){\makebox(0,0)[bl]{\small $b$}}  
            \put(6.2,2.1){\makebox(0,0)[bl]{\small $b$}} 
            \put(7.2,2.1){\makebox(0,0)[bl]{\small $b$}} 
            \put(3.2,3.1){\makebox(0,0)[bl]{\small $c$}} 
            
            % +'s  (1,1)(2,1)(3,1)(4,1)(5,1)(6,1)(7,1)
            \xdot{(1,1)}\xdot{(2,1)}\xdot{(3,1)}\xdot{(4,1)}\xdot{(5,1)}\xdot{(6,1)}\xdot{(7,1)}
            
            % Open dots 
            \open{(4,3)}\open{(5,3)}\open{(6,3)}\open{(7,3)}
            \open{(2,4)}\open{(3,4)}\open{(4,4)}\open{(5,4)}\open{(6,4)}\open{(7,4)}
            \open{(3,5)}\open{(4,5)}\open{(5,5)}\open{(6,5)}\open{(7,5)}
            \open{(3,6)}\open{(4,6)}\open{(5,6)}\open{(6,6)}\open{(7,6)}
            \open{(3,7)}\open{(4,7)}\open{(5,7)}\open{(6,7)}\open{(7,7)}
            \open{(4,8)}\open{(5,8)}\open{(6,8)}\open{(7,8)}
            \open{(4,9)}\open{(5,9)}\open{(6,9)}\open{(7,9)}
            \open{(4,10)}\open{(5,10)}\open{(6,10)}\open{(7,10)}
            \open{(5,11)}\open{(6,11)}\open{(7,11)}
            \open{(5,12)}\open{(6,12)}\open{(7,12)}
            \open{(6,13)}\open{(7,13)}
            \open{(6,14)}\open{(7,14)}
            \open{(6,15)}\open{(7,15)}
            \open{(7,16)}
            \open{(7,17)}
            \open{(7,18)}
        \end{picture}
        \caption{Generators for $S_D$ labeled according to Conjecture \ref{conj:2point ineffective} when $D=\frac{13}{5}P^{(1)}-\frac{1}{7}P^{(2)}.$}
    \end{figure}
    
    \newpage
    \begin{example}
        \label{example: D=2/3P^{(1)}-3/5P^{(2)}}
        %Let $D=\frac{2}{5}P^{(1)}-\frac{3}{8}P^{(2)}.$ Then MAGMA says $S_D$ is generated (up to degree $120$) by functions in degrees: 
        %\[40, 45, 48, 50, 53, 55, 56, 58, 60, 61, 63, 64, 65, 66, 68, 69, 70, 71, 72,73, 74, 75, 76, 77, 78, 79, 80, 81, 82,\]
        %\[ 83, 84, 85, 86, 87, 88, 89, 91, 92, 94, 97, 99, 102, 107.\]
        Let $D=\frac{2}{3}P^{(1)}-\frac{3}{5}P^{(2)}.$ This is an example of the behavior discussed in Example \ref{example: subtle 2pt behavior}. Up to degree $60,$ Magma computes generators for $S_D$ in degrees:
        \[15, 18, 20, 21, 23, 24, 25, 26, 27, 28, 29, 30, 31, 32, 33, 34, 35, 37. \]
        
        \begin{figure}[htbp]
            \scalebox{0.75}{
                \begin{picture}(37,25)(-0.5,-0.5)
                    % Axes
                    \drawline(0,0)(37,0)
                    \drawline(0,0)(0,25)
                    
                    % Boundary lines
                    \drawline(0,0)(37,24.67)
                    \drawline(0,0)(37,22.2)
                    
                    % Solid dots 
                    \solid{(15,10)}\solid{(18,12)}\solid{(20,13)}\solid{(21,14)}\solid{(23,15)}\solid{(24,16)}\solid{(25,16)}\solid{(26,17)}\solid{(27,18)}\solid{(28,18)}\solid{(29,19)}\solid{(30,19)}\solid{(31,20)}\solid{(32,21)}\solid{(33,21)}\solid{(34,22)}\solid{(35,22)}\solid{(37,24)}
                    %Dot labels
                    \put(15.1,10.0){\makebox(0,-0.3)[ul]{\small \emph a}}
                    \put(18.1,12.0){\makebox(0,-0.3)[ul]{\small \emph b}}
                    \put(20.1,13.0){\makebox(0,-0.3)[ul]{\small \emph b}}
                    \put(21.1,14.0){\makebox(0,-0.3)[ul]{\small \emph b}}
                    \put(23.1,15.0){\makebox(0,-0.3)[ul]{\small \emph b}}
                    \put(24.1,16.0){\makebox(0,-0.3)[ul]{\small \emph b}}
                    \put(25.1,16.0){\makebox(0,-0.3)[ul]{\small \emph b}}
                    \put(26.1,17.0){\makebox(0,-0.3)[ul]{\small \emph b}}
                    \put(27.1,18.0){\makebox(0,-0.3)[ul]{\small \emph b}}
                    \put(28.1,18.0){\makebox(0,-0.3)[ul]{\small \emph b}}
                    \put(29.1,19.0){\makebox(0,-0.3)[ul]{\small \emph b}}
                    \put(30.1,19.0){\makebox(0,-0.3)[ul]{\small \emph b}}
                    \put(31.1,20.0){\makebox(0,-0.3)[ul]{\small \emph b}}
                    \put(32.1,21.0){\makebox(0,-0.3)[ul]{\small \emph b}}
                    \put(33.1,21.0){\makebox(0,-0.3)[ul]{\small \emph b}}
                    \put(34.1,22.0){\makebox(0,-0.3)[ul]{\small \emph b}}
                    \put(35.1,22.0){\makebox(0,-0.3)[ul]{\small \emph b}}
                    \put(37.1,24.0){\makebox(0,-0.3)[ul]{\small \emph b}}
                    
                    % +'s  
                    \xdot{(3,2)  }\xdot{(5,3)  }\xdot{(6,4)  }\xdot{(8,5)  }\xdot{(10,6) }\xdot{(11,7) }\xdot{(12,8) }\xdot{(13,8) }\xdot{(14,9) }\xdot{(15,9) }\xdot{(16,10)}\xdot{(17,11)}\xdot{(18,11)}\xdot{(19,12)}\xdot{(20,12)}\xdot{(21,13)}\xdot{(22,14)}\xdot{(23,14)}\xdot{(24,15)}\xdot{(25,15)}\xdot{(26,16)}\xdot{(27,17)}\xdot{(28,17)}\xdot{(29,18)}\xdot{(30,18)}\xdot{(31,19)}\xdot{(32,20)}\xdot{(33,20)}\xdot{(34,21)}\xdot{(35,21)}\xdot{(36,22)}\xdot{(37,23)}
                    
                    % Open dots(36,24)
                    \open{(30,20)}\open{(33,22)}\open{(35,23)}\open{(36,23)}\open{(36,24)}
            \end{picture}}	
            \caption{Generators for $S_D$ labeled according to Conjecture \ref{conj:2point ineffective} where $D=\frac{2}{3}P^{(1)}-\frac{3}{5}P^{(2)}.$}
        \end{figure}
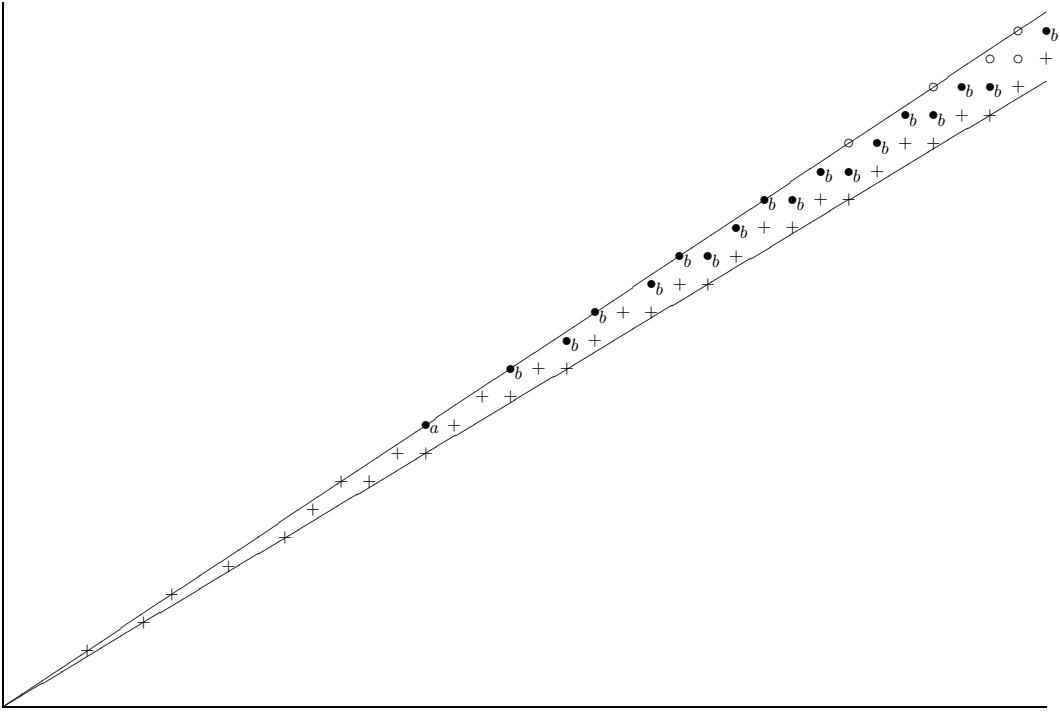 
    \end{example}   
    
    \section{Arbitrary effective \texorpdfstring{$\bbQ$}{Q}-divisors}
    
    For divisors supported by more than two points, generators and relations often occur in high degrees, and it is difficult to explicitly describe the canonical ring. Thanks to \cite{Landesman-Ruhm-Zhang-Spin-canonical-rings} we are able to determine inductive presentations of such rings for effective $\bbQ$-divisors, similar to the main inductive theorem in Voight--Zureick-Brown \cite[$8.3.1$]{VZB}.
    
    % Let $\displaystyle{D = \sum\limits_{i=1}^n \frac{\alpha_i}{\beta_i}(P_i)}$ be an effective $\bbQ$-divisor and assume without loss of generality that $\displaystyle{\frac{\alpha_1}{\beta_1}\geq \frac{\alpha_2}{\beta_2}\geq\cdots\geq\frac{\alpha_n}{\beta_n}}$. We consider the case when $n\geq 3,$ and begin by stating several examples.
    
    % \begin{example} Let $D=-\frac{1}{2}P_1+\frac{1}{3}P_2+\frac{1}{5}P_3$. \evan{This is an ineffective divisor. Move this example?} Then $\deg D = 1/30 > 0$. For $d\leq 30$, $\deg \lfloor dD\rfloor <1$ and so no generators appear. At $d=30$, $\deg\lfloor dD \rfloor$ rises to $1$, which yields one generator which we denote by $x$. The next generator arises when $d=36$, and we will denote this by $y$. There are many other generators (as indicated by a Magma computation). The last instance for which $\deg dD<3$ is when $d=119$. This gives us a (crude) bound for the degrees of the generators by GMNT. For $d>120$, we have that $H^0(X,120D)\otimes H^0(X,(d-120)D)\to H^0(dD)$ is surjective. For this example, all generators appear by degree $84$. If instead we look at $D=\frac{1}{2}P_1+\frac{1}{3}P_2+\frac{1}{5}P_3$, then GMNT gives us that the generators appear by degree 4. 
        % \end{example}
    
    \begin{example} \label{ex:3point}
        As in \cite[Example $5.7.7$]{VZB} let $D'=\frac{1}{2}P_1+\frac{1}{2}P_2$, and following \cite[Example $5.7.9$]{VZB} let $D=D'+\frac{1}{2}P_3=\frac{1}{2}P_1+\frac{1}{2}P_2+\frac{1}{2}P_3$. Then $\deg D = 3/2$. By the Generalized Max Noether Theorem \cite[Lemma 3.1.4]{VZB}, $H^0(C,2D)\otimes H^0(C,(d-2)D)\to H^0(C,dD)$ is surjective for $d>5$, so all generators occur in degree $<5$.
        
        More precisely, in \cite{VZB} it is computed that $S_{D'}$ is generated in degrees $1$, $2$, and $4$ while $S_D$ is generated in degrees $1$, $2$, and $2$. The square of the last degree-$2$ generator of $S_D$ is the degree-$4$ generator for $S_{D'}.$
        
        So the minimal presentations have the form $S_D = \Bbbk[u,x_1,x_2]/I_D$ and $S_{D'} = \Bbbk[u,x_1,x_2^2]/I_{D'}$, where $I_D$, $I_{D'}$ are the relation ideals. In particular, $S_D$ is generated over $S_{D'}$ by $x_2$.
    \end{example}
    
    A powerful result which allows one to compute an inductive presentation of the section ring of a general $\bbQ$-divisor on an elliptic curve is \cite[Lemma $4.4$]{Landesman-Ruhm-Zhang-Spin-canonical-rings}. We paraphrase the result, which is of independent interest, in the terminology of this document for reference. We then use it to prove Theorem \ref{thm:bounds_intro} by verifying that our general $\bbQ$-divisors satisfy the hypotheses of the lemma. 

    If $D$ is a divisor on a curve $C$, $P$ is a point on $C$, and $f$ is a rational function on $C$, we define, following \cite{Landesman-Ruhm-Zhang-Spin-canonical-rings},
    \[
      \ord_P^D(f) = \ord_P(f) + \ord_P(D),
    \]
    so that $f \in H^0(D)$ if and only if $\ord_P^D(f) \geq 0$ for all points $P$.
    
    \begin{lemma}[{\cite[Lemma $4.4$]{Landesman-Ruhm-Zhang-Spin-canonical-rings}}]
        \label{l: LRZ 4.4}
        Let $C$ be a curve (of any genus) and let $D'$ be an effective $\bbQ$-divisor on $C$. Suppose that $P$ is not a basepoint of $dD'$ for any $d\in \bbN,$ i.e.\ we can choose generators $u = f_0, f_1,\ldots, f_m$ of $S_{D'}$ with $\deg(u)=1,$ and $\ord_P^{D'}(f_i)=0$ for $0\leq i\leq m$.
        
        Suppose that $\displaystyle{D=D'+\frac{a}{b}P}$ for some $a,$ $b\in \bbN$ such that $\frac{a}{b}$ is reduced and 
        \begin{equation} \label{eq:LRZ h0}
            h^0(C,\floor{dD})=h^0(C,\floor{dD'})+\Big\lfloor d \cdot \frac{a}{b}\Big\rfloor\quad\text{ for all }d\in \bbN.
        \end{equation}
        Then 
        \begin{enumerate}[$($a$)$]
            \item $S_D$ is generated over $S_{D'}$ by some elements $g_1,\ldots,g_n$ whose degrees $d_i = \deg(g_i)$ and pole orders $c_i = -{\ord}_P^{D'}(g_i)$ satisfy $c_i\leq c_{i+1}\leq a$ and $d_i\leq d_{i+1}\leq b$ for all $i$.
            
            \item Choose a monomial ordering $\prec$ on $\Bbbk[u = f_0, f_1, \ldots, f_m]$ such that \[\ord_u(f) <\ord_u(h)\Rightarrow f\prec h.\] Equip $\Bbbk[f_0, \ldots, f_m]$ with the graded $P$-lexicographic order from \cite[Definition $4.2$]{Landesman-Ruhm-Zhang-Spin-canonical-rings} and equip 
            $\displaystyle{\Bbbk[g_1,\ldots, g_n]\otimes \Bbbk[f_0, \ldots, f_m]}$ with the block order from \cite[Definition $2.19$]{Landesman-Ruhm-Zhang-Spin-canonical-rings}. Let $I'$ denote the ideal of relations of 
            \[\Bbbk[f_0,\ldots, f_m]\to S_{D'}\] and let $I$ denote the ideal of relations of 
            \[\Bbbk[f_0,\ldots, f_m,g_1,\ldots, g_n]\to S_D.\] Then 
            \[\operatorname{in}_{\prec}(I)=\operatorname{in}_{\prec}(I')\Bbbk[f_0,\ldots, f_m,g_1,\ldots, g_n] + \langle U_i: 1\leq i\leq n \rangle + \langle V \rangle,\]
            where $\displaystyle{V=\{f_ig_j: 1\leq i\leq m, 1\leq j\leq n \}}$ and $U_i$ is the set of monomials of the form $\prod_{j=1}^i g_j^{e_j}$ with $e_j\in \bbN_{\geq 0}$ such that 
            \begin{enumerate}
                \item $\sum_{j=1}^i e_jc_j\leq c_{i+1},$
                \item there does not exist $(e'_1,\ldots,e'_i)\neq (e_1,\ldots, e_i)$ with all $e'_j\leq e_j$ and $\sum_{j=1}^i e'_jc_j\geq c_{i+1},$ and 
                \item there does not exist some $r<i$ such that $\sum_{j=1}^r e_jc_j>c_{r+1}.$
            \end{enumerate}
            
            \item Let $\tau = \max_i \deg f_i$. Then $S_D$ is generated over $S_{D'}$ in degrees up to $b$, with $I$ generated over $I'$ in degrees up to $\max\{2b,b+\tau\}$.
        \end{enumerate}
    \end{lemma}
    \begin{remark}
        Note that the condition $h^0(C,\floor{D'})\geq 1$ from the original statement of \cite[Lemma $4.4$]{Landesman-Ruhm-Zhang-Spin-canonical-rings} is automatic any for effective $\bbQ$-divisor $D$ on a genus $1$ curve $C$ since we have $h^0(C,\floor{D})=\max\{\deg \floor{D},1\}$. Also, for $u$ we can take the usual $u$ in the definition of the section ring.
    \end{remark}

    \begin{proof}[Proof of Theorem \ref{thm:bounds_intro}]
      Now let
      \[
        D = \sum_{i=1}^n \frac{a^{(i)}}{b^{(i)}} \big(P^{(i)}\big)
      \]
      be an effective divisor, with $\alpha^{(1)} \geq \cdots \geq \alpha^{(n)}$. We prove that the section ring $S_D$ is generated in degrees at most
      \[
        B = \max\{3b^{(1)}, b^{(2)}, \ldots, b^{(n)}\},
      \]
      with relations in degrees at most $2B$.

      In the base case $n = 1$, we are claiming that the section ring $S_D$ of a divisor $D = (a/b)P$  is generated in degrees at most $B = 3 b$ with relations in degrees at most $6 b$. The generator bound follows from Theorem \ref{thm:1point_gens}, observing that the exceptional generator $\ic$ has degree at most $\ceil{3b/a} \leq 3b$. The relation bound is automatic for relations with quadratic leading terms. By Theorem \ref{thm:1point_rels}, the only other minimal relation has leading term $f_0 f_3^2$ and degree $1 + 2 d_3 \leq 3b < 6b$, completing the proof of the base case.

      To prove the induction step, we must verify that the subdivisor
      \[
        D' = \sum_{i = 1}^{n - 1} \frac{a^{(i)}}{b^{(i)}}\big(P^{(i)}\big)
      \]
      and the point $P = P^{(n)}$ satisfy the hypotheses of Lemma \ref{l: LRZ 4.4}. That $P$ is not a basepoint of $dD'$ is automatic for us, because either
      \begin{itemize}
          \item $\floor{dD'} = 0$, and the constant $1 \in H^0(dD')$ has no basepoints, or
          \item $\deg \floor{dD'} = 1$, and the constant $1 \in H^0(dD')$ has a basepoint $P_i \neq P$, or
          \item $\deg \floor{dD'} \geq 2$, and the linear system $H^0(dD')$ is basepoint-free by Fact \ref{fact: principal divisors on elliptic curves}.
      \end{itemize}
      As to \eqref{eq:LRZ h0}, the hypothesis $\alpha_1 \geq \cdots \geq \alpha_n$ ensures that $\floor{dD} = 0$ exactly when $d < \ceil{1/\alpha_1}$. If this condition holds, then \eqref{eq:LRZ h0} reduces to $1 = 1 + 0$, which is true. Otherwise, \eqref{eq:LRZ h0} reduces to
      \[
        \deg \floor{dD} - 1 = (\deg \floor{dD'} - 1) + \deg \Floor{d \cdot \frac{a}{b} \cdot P},
      \]
      which is also true. Thus Lemma \ref{l: LRZ 4.4} applies.
      
      By induction, $S_{D'}$ is generated in degrees at most
      \[
        B' = \max\{3b^{(1)}, b^{(2)}, \ldots, b^{({n-1})}\},
      \]
      with relations in degrees at most $2B'$. Accordingly, $S_D$ is generated over $S_{D'}$ in degrees at most $b^{(n)}$, so generated over $\Bbbk$ in degrees at most
      \[
        \max\{B', b^{(n)}\} = B.
      \]
      The relation ideal $I$ is generated over its counterpart $I'$ in degrees at most
      \[
        \max\{2b^{(n)}, b^{(n)} + \tau\} \leq \max\{2b^{(n)}, b^{(n)} + B'\},
      \]
      and since $I'$ is generated in degrees at most $2B'$, the degrees of all relations are bounded by
      \[
        \max\{2b^{(n)}, b^{(n)} + B', 2B'\} = 2B,
      \]
      as desired.
    \end{proof}

    \bibliographystyle{amsalpha}
    \bibliography{bibliography} % Entries are in the bibliography.bib file
\end{document}